\newtheorem{lemma}{Lemma}[section]
\newtheorem{prop}[lemma]{Proposition}
\newtheorem{theorem}{Theorem}
\newtheorem{cor}[lemma]{Corollary}
\theoremstyle{definition}
\newtheorem{rmrk}[lemma]{Remark}
\newcommand{\e}{{\mathbf e}}
\newcommand{\PGL}{\mathrm{PGL}}
\renewcommand{\r}{\mathbf{r}}
\newcommand{\cO}{\mathcal O}
\newcommand{\cC}{\mathcal C}
\newcommand{\ccC}{\overline{\cC}}
\newcommand{\N}{\mathbb N}
\newcommand{\R}{\mathbb{R}}
\newcommand{\Z}{\mathbb{Z}}
\renewcommand{\H}{\mathbb{H}}
\newcommand{\GL}{\mathrm{GL}}
\newcommand{\SO}{\mathrm{SO}}
\renewcommand{\mod}{{\rm mod\; }}
\newcommand{\be}{\begin{equation}}
\newcommand{\ee}{\end{equation}}
\renewcommand{\mathbb}{\mathbbm}
\renewcommand{\r}{\mathbf{r}}
\renewcommand{\H}{\mathcal{H}} 
\newcommand{\cP}{\mathcal{P}} 
\newcommand{\fe}{\varphi}
\renewcommand{\Z}{\mathbf{Z}}
\renewcommand{\R}{\mathbf{R}}
\renewcommand{\e}{\varepsilon}
\newcommand{\del}{\partial}
\title {Secular dynamics for curved two-body problems} 
\author{Connor Jackman\footnote{Cimat, Guanajuato, Mexico. \today}}\date{}
\begin{document}

\maketitle

\begin{abstract}Consider the dynamics of two point masses on a surface of constant curvature subject to an attractive force analogue of Newton's inverse square law. When the distance between the bodies is sufficiently small, the reduced equations of motion may be seen as a perturbation of an integrable system. We define suitable action-angle coordinates to average these perturbing terms and observe dynamical effects of the curvature on the motion of the two-bodies.
\end{abstract}

\section{Introduction}

When the geometry of constant curvature spaces was brought to attention in the 19th century, mechanical problems in such spaces presented an opportunity for an interesting variation on familiar themes. In particular, studying the motion of point masses subject to extensions of Newton's usual (flat) inverse square force law to analogous `curved inverse square laws'  was soon undertaken, see e.g. the historical reviews in \cite{BMK2, DCS}.  Consequently, we speak of \textit{the} curved Kepler problem or \textit{the} curved $n$-body problem to refer to the mechanics of point masses in a space of constant curvature under such  nowadays customary extensions of Newton's inverse square force. 

While the curved Kepler problem is quite similar to the flat Kepler problem --both being characterized by their conformance to analogues of Kepler's laws, or super-integrability-- there are striking differences between the curved 2-body problem and the usual 2-body problem. Namely, it has long been noticed that the absence of Galilean boosts prohibits straightforward reduction of a curved 2-body problem to a curved Kepler problem, and more recently it has been shown --as an application of Morales-Ramis theory in \cite{Shch}, and by numerically exhibiting complicated dynamics in \cite{BMK}-- that the curved 2-body problem, in contrast to the usual 2-body problem, is non-integrable. In this article we will apply classical averaging techniques to compare some features of certain orbits in these curved and flat 2-body dynamics.

When the two bodies are close, their motion is a small perturbation of the flat 2-body problem. Indeed, after reduction, we find their relative motion governed by a Kepler problem and additional perturbing terms which vanish as the distance between the bodies goes to zero. Consequently their motion at each instant is approximately along an \textit{osculating conic}: the conic section the two bodies would follow if these perturbing terms were ignored. The averaged, or \textit{secular}, dynamics take into account how the perturbing terms slowly deform these osculating conics.

Our main results, theorems \ref{thm:qper} and \ref{thm:per} below, describe quasi-periodic and periodic motions in the reduced curved 2-body problems (see fig.~\ref{fig:orbs}).

\begin{figure}[!htb]
\centering
\includegraphics[scale = .28]{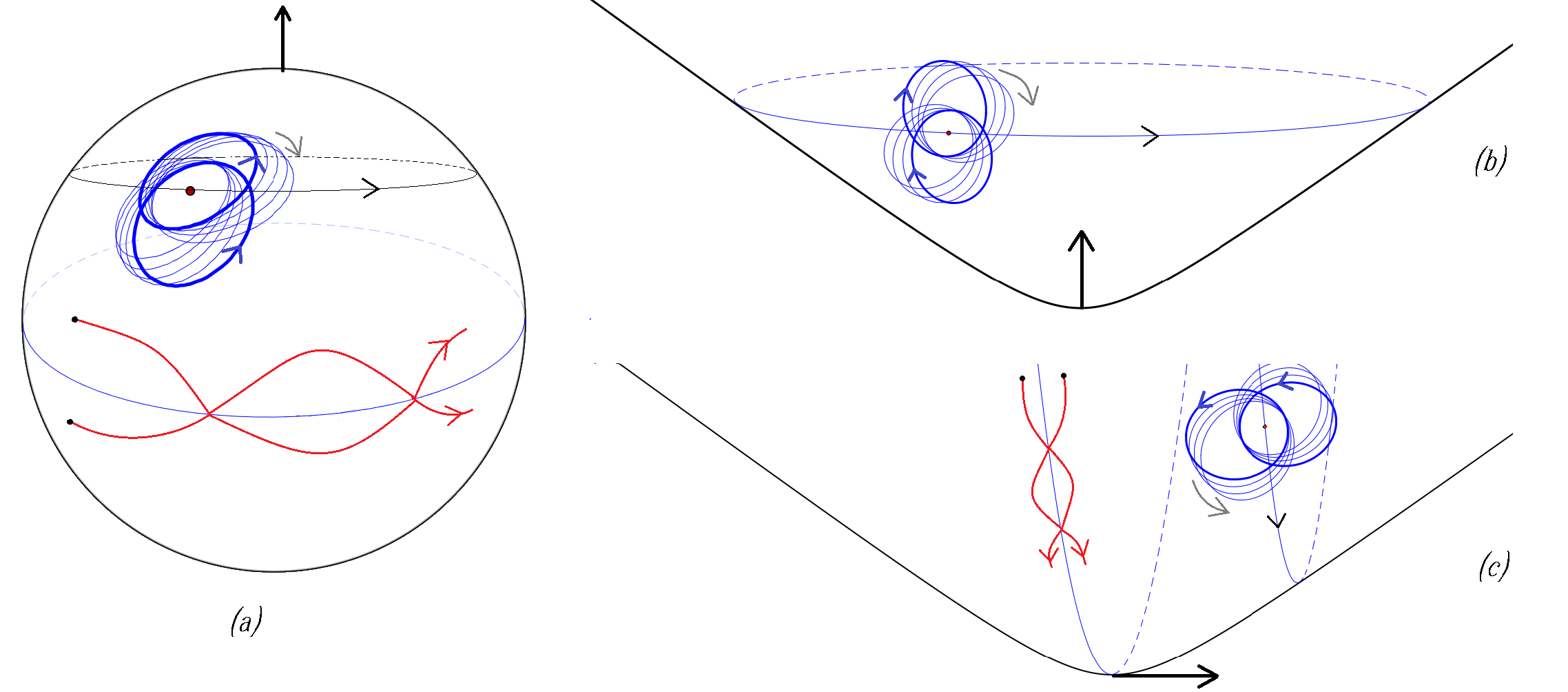}
\caption{\footnotesize{(a) For two sufficiently close bodies on a sphere, there are motions (blue) along which the two bodies revolve nearly along conic sections which precess in the direction \textit{opposite} to the bodies (fast) motion around the conic. The angular momentum here is vertical, and the whole system rotates about the angular momentum axis. As the two bodies get closer to the equator, their rate of precession decreases and the eccentricity of their osculating conic increases until at the equator two types of (regularized) periodic collision orbits survive: one for which the two bodies bounce perpendicularly to the equator (red) and another for which they bounce along the equator. Similar results hold for two bodies in a negatively curved space, but now the precession is in the \textit{same} direction as the bodies motion around the conic (b) with vertical angular momentum and (c) for angular momentum 'to the right'. We expect some chaotic dynamics to occur near the collision orbits (remark \ref{rmrk:split}).}}\label{fig:orbs}
\end{figure}

We will begin in section \ref{sec:Kep} by recalling properties of the curved Kepler problem, with details in appendix \ref{app:DelPoi}.  As an aside, in seeking parametrizations of the curved Kepler orbits useful for averaging, we derive a 'curved Kepler equation' eq. (\ref{eq:KepEq}) that, as far as we can tell, is new. 

The curved 2-body problem admits symmetries by the isometry group of the space of constant curvature. In section \ref{sec:2bdy}, we carry out a Meyer-Marsden-Weinstein (symplectic) reduction to formulate the equations of motion for the reduced curved 2-body problem (prop.~\ref{prop:red}), and define a scaling of the coordinates (prop.~\ref{prop:scal})  allowing us to focus our attention on the dynamics when the two bodies are close relative to the curvature of the space.

In section \ref{sec:SecDyn} we prove our two main results, establishing that the behaviour depicted in fig.~\ref{fig:orbs} holds for 'most' orbits when the two bodies in a given curved space are sufficiently close.

\begin{rmrk}
Our results are related to a remark (in section 4.1 of \cite{BMK}), in that determining the secular dynamics is a preliminary step towards a proof of non-integrability by 'Poincar\'e's methods'.   Upon defining curved analogues of the Delaunay and Poincar\'e coordinates, our methods are quite similar to those applied to the usual 3-body problem in \cite{FejQP}, although the setting here is simpler in the sense that we only need to average over one fast angle.
\end{rmrk}



\section{Curved Kepler problems}
\label{sec:Kep}
Newton's '$1/r$' potential may be characterized by either of the following properties:
\begin{itemize}
\item it is a (constant multiple) of the fundamental solution of  the 3-dimensional laplacian, $\Delta_{\R^3}$

\item the corresponding central force problem has orbits following Kepler's laws.
\end{itemize}
Generalizing these properties to spaces of constant positive (resp.~negative) curvature yields '$\cot\fe$' (resp.~'$\coth\fe$') potentials. We refer to \cite{Al} for a discussion of these long established potentials and their properties using central projection (fig. \ref{fig:CentProj}), presenting some details here in appendix \ref{app:DelPoi}. In particular, the curved Kepler problems are super integrable and on the open subset of initial conditions leading to bounded non-circular motions we have action-angle coordinates, $(L, \ell, G, g)$ where the energy depends only on $L$ (see prop. \ref{prop:DP}).

\begin{figure}[!htb]
\centering
\includegraphics[scale = .5]{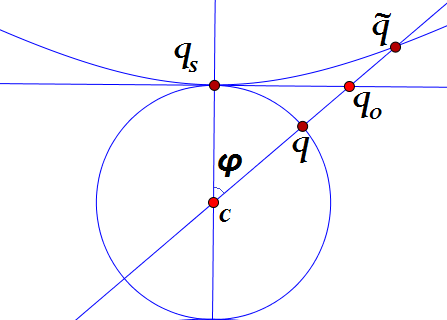}
\caption{\footnotesize{The curved Kepler dynamics --with \textit{fixed} center of attraction or 'sun' at $q_s$-- may be characterized by requiring that its \textit{unparametrized} trajectories centrally project to those of the flat Kepler problem in the tangent plane at $q_s$. Setting $\rho = |cq_s|$, the sphere has curvature $\kappa = 1/\rho^2$ and the hyperboloid (with the restricted Minkowski metric) has curvature $\kappa = - 1/\rho^2$.  We call $\fe := \angle (q_scq)$ (resp.~$\tilde\fe:= \angle(q_sc\tilde q)$ measured with the Minkowski inner product) the \textit{angular distance} from $q$ to $q_s$ (resp.~$\tilde q$ to $q_s$). The distance from $q$ on the sphere to $q_s$ is given by $\rho\fe$, while the distance from $\tilde q$ to $q_s$ on the hyperboloid is given by $\rho \tilde\fe$. We set $r:=|q_oq_s| = \rho \tan\fe = \rho\tanh\tilde\fe$.}}\label{fig:CentProj}
\end{figure}

In what follows, we will consider motions satisfying $\fe = O(\e)$ for all time, where $\e$ is a small parameter. Evidently, for these Kepler problems, such orbits are central projections of elliptic orbits in the plane. Also, to avoid repetitive arguments and computations, we will give details for the positive curvature case and remark on the analogous results for negative curvature.

\section{Curved two body problems}\label{sec:2bdy}

Let $S^2$ be the sphere of curvature $\kappa$, with  $\|\cdot\|_\kappa$ the norm given by its constant curvature metric. Consider two point masses, $$(q_1,q_2) =:q\in (S^2\times S^2)\backslash \{|\cot \fe| = \infty\} =: Q,$$ on this sphere of masses $m_1, m_2>0$ with $\fe$ the angular distance between $q_1$ and $q_2$. Normalize the masses so that $m_1 + m_2 = 1$. We consider the Hamiltonian flow on $T^*Q$ of: $$F := \frac{\|p_1\|_\kappa^2}{2m_1} + \frac{\|p_2\|_\kappa^2}{2m_2} - \frac{m_1m_2}{\rho}\cot\fe.$$

The tangential components of the forces are of equal magnitude and directed towards eachother along the arc $\stackrel{\frown}{q_1q_2}$. By letting $\vec q_j$ be the position vector of $q_j$ from the center of the sphere, the components of the \textit{angular momentum vector}: $$\vec C := m_1\vec q_1\times \dot{\vec q}_1 + m_2 \vec q_1\times \dot{\vec q}_2$$ are first integrals. They correspond to the symmetry by the diagonal action, $(q_1,q_2)\mapsto (gq_1, gq_2)$, of $\SO_3$ on $Q$. Since $\fe\ne 0, \pi$ in $Q$, this $\SO_3$ action on each level set $\{\fe = cst.\}\subset Q$ is free and transitive. By choosing a representative configuration (see fig.~\ref{fig:com}) in each level set of $\fe$ we obtain a slice of this group action and have: \[
    Q\cong I\times \SO_3,~~I = (0,\pi)\ni \fe,\]  such that the $\SO_3$ action is by left multiplication on the second factor.

\begin{figure}[!htb]
\centering
\includegraphics[scale = .5]{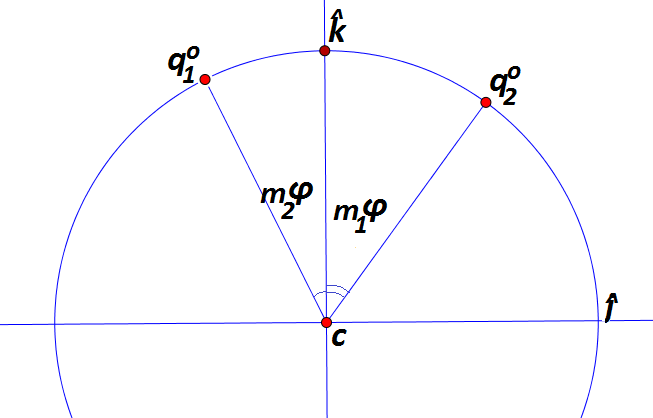}
\caption{\footnotesize{Our choice of representative configuration. Embed the sphere of curvature $\kappa$ in $\R^3$, centered at the origin and take an orthonormal basis $\hat i, \hat j, \hat k$ with $\vec C = C\hat k$. We take $q^o(\fe):=( -\rho\sin m_2\fe \hat j + \rho\cos m_2\fe \hat k, \rho\sin m_1\fe \hat j + \rho\cos m_1\fe \hat k)$. Given $q\in Q$ with angular distance $\fe$ between the two bodies, there is a unique element $g\in \SO_3$ with $q = gq^o(\fe)$.}}\label{fig:com}
\end{figure}


\subsection{Reduced equations of motion} An application of symplectic reduction (see e.g. \cite{ACM} App. 5) gives:

\begin{prop}\label{prop:red} Fix the angular momentum $\vec C\ne 0$. The reduced 2-body dynamics on a sphere of curvature $\kappa = \frac{1}{\rho^2}$ takes place in $T^*I\times\mathcal{O}_\mu$, where $\mathcal{O}_\mu$ is a co-adjoint orbit in $\mathfrak{so}_3^*$. It is the Hamiltonian flow of:
$$F_{red} = Kep_\kappa + \kappa(\frac{\|\vec C\|^2}{2} - p_\theta^2) + O(\fe),~~~dp_\fe\wedge d\fe + dp_\theta\wedge d\theta,$$ for $|p_\theta|<\|\vec C\|$, and with $Kep_\kappa = \frac{p_\fe^2 + p_\theta^2/\sin^2\fe}{2\rho^2m_1m_2} - \frac{m_1m_2}{\rho}\cot\fe$, the Hamiltonian for a curved Kepler problem.
\end{prop}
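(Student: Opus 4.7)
The plan is to carry out a Marsden--Weinstein reduction of $T^*Q$ at momentum value $\vec C$ using the slice trivialization $Q\cong I\times \SO_3$ furnished by $q^o(\fe)$, then expand the resulting reduced Hamiltonian in powers of $\fe$. First I would lift the slice trivialization to $T^*Q \cong T^*I\times T^*\SO_3$ and then left-trivialize $T^*\SO_3 \cong \SO_3\times \mathfrak{so}_3^*$, working in coordinates $(\fe,p_\fe,g,\xi)$ with $\xi\in\mathfrak{so}_3^*\cong \R^3$. The $\SO_3$-action on $Q$ becomes left multiplication on the group factor, so the lifted momentum map reads $J(\fe,p_\fe,g,\xi)=\Ad^*_g\xi$; fixing $J=\vec C$ forces $\|\xi\|=\|\vec C\|$, and the standard identification of $J^{-1}(\vec C)/(\SO_3)_{\vec C}$ with $T^*I\times \mathcal{O}_\mu$ holds, $\mathcal{O}_\mu$ being the $2$-sphere of radius $\|\vec C\|$ with its Kostant--Kirillov form. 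I then pick Darboux coordinates $(p_\theta,\theta)$ on $\mathcal{O}_\mu$ minus its two poles, with $\xi_3 = p_\theta$, which puts the reduced symplectic form in the shape $dp_\fe\wedge d\fe + dp_\theta\wedge d\theta$.

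Next I would compute the reduced Hamiltonian by pushing $F$ through the reduction. The potential $-(m_1 m_2/\rho)\cot\fe$ is already $\SO_3$-invariant and descends directly. For the kinetic energy I write $\dot q = \dot g\,q^o + g\dot\fe\,\partial_\fe q^o$ and pass to the body frame via $\Omega := g^{-1}\dot g \in \mathfrak{so}_3$; the result splits into a pure $\dot\fe^2$ piece, a pure $\Omega$ piece, and a cross term proportional to $\sum_a m_a(q_a^o\times \partial_\fe q_a^o)$. A short computation gives $q_1^o\times \partial_\fe q_1^o = m_2\rho^2\hat i$ and $q_2^o\times\partial_\fe q_2^o = -m_1\rho^2\hat i$, so the mass-weighted sum vanishes; this cancellation is the payoff of the symmetric slice. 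The $\dot\fe^2$ piece contributes $p_\fe^2/(2\rho^2 m_1 m_2)$, and the rotational piece becomes $\tfrac12\,\xi^{T}\mathbb{I}(\fe)^{-1}\xi$, with $\mathbb{I}(\fe)$ the moment-of-inertia tensor of the two masses at $q^o(\fe)$.

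A direct calculation of $\mathbb{I}(\fe)$ in the basis $(\hat i,\hat j,\hat k)$ shows it is diagonal to leading order: $\mathbb{I}_{11}=\rho^2$, $\mathbb{I}_{22}=\rho^2+O(\fe^2)$, $\mathbb{I}_{33}=\rho^2\bigl[m_1\sin^2(m_2\fe)+m_2\sin^2(m_1\fe)\bigr]$, with off-diagonal entries of order $\fe^3$ (so their contribution to $\tfrac12\,\xi^{T}\mathbb{I}^{-1}\xi$ is $O(\fe)$). Using $\xi_1^2+\xi_2^2=\|\vec C\|^2-p_\theta^2$ together with the Taylor expansion
\[
 \frac{1}{m_1\sin^2(m_2\fe)+m_2\sin^2(m_1\fe)} \;=\; \frac{1}{m_1 m_2\sin^2\fe} - 1 + O(\fe^2),
\]
one finds $\tfrac12\,\xi^{T}\mathbb{I}^{-1}\xi = \dfrac{p_\theta^2}{2\rho^2 m_1 m_2\sin^2\fe} + \kappa\Bigl(\dfrac{\|\vec C\|^2}{2}-p_\theta^2\Bigr) + O(\fe)$. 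Adding the radial kinetic piece and the $\cot\fe$ potential assembles $F_{red} = Kep_\kappa + \kappa(\|\vec C\|^2/2 - p_\theta^2) + O(\fe)$.

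The main delicate point is the Taylor identity above: the $-1$ constant correction (equivalently, the $-\kappa$ correction to $1/\mathbb{I}_{33}$) appears with exactly the right coefficient only after invoking $m_1^3+m_2^3 = 1-3 m_1 m_2$, which follows from $m_1+m_2=1$. It is this algebraic cancellation that produces the clean term $\kappa(\|\vec C\|^2/2-p_\theta^2)$ rather than a more complicated mass-dependent remainder, and getting this bookkeeping right is where most of the work lies; the rest is a routine application of cotangent bundle reduction. The restriction $|p_\theta|<\|\vec C\|$ in the statement simply reflects that $(p_\theta,\theta)$ is a chart on $\mathcal{O}_\mu$ only away from its two poles.
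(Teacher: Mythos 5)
Your argument is correct and follows essentially the same route as the paper: reduction over the slice $q^o(\fe)$, computation of the inverse inertia tensor at the representative configuration, and Taylor expansion in $\fe$; your cancellation $\sum_a m_a\, q_a^o\times\partial_\fe q_a^o=0$ is exactly the paper's observation that $\langle \partial_\fe, X_j\rangle=0$, which motivated its choice of slice. The only cosmetic differences are your use of the left (body) trivialization of $T^*\SO_3$ in place of the paper's right trivialization, and your extraction of the constant $-1$ correction via $m_1^3+m_2^3=1-3m_1m_2$, where the paper instead inverts the metric exactly and uses $\sin^2(a\fe)/\sin^2\fe=a^2+O(\fe^2)$ together with $m_1+m_2=1$ --- both are valid.
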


\begin{proof} We first find the mass weighted metric, $\langle (\vec v_1, \vec v_2), (\vec u_1, \vec u_2)\rangle = m_1 \vec v_1\cdot \vec u_1 + m_2 \vec v_2\cdot \vec u_2$, in terms of our identification $Q\cong I\times \SO_3$ (fig.~\ref{fig:com}). Since the metric is invariant under left translations, it suffices to determine it at the identity (our representative configurations, $q^o$, in fig.~\ref{fig:com}).

Let $X_1, X_2, X_3$ be the infinitesimal symmetry vector fields generated by rotations about the $\hat i, \hat j, \hat k$ axes. Together with $\del_\fe$, they frame $Q$. We compute:
\begin{align*}
\langle \del_\fe, \del_\fe\rangle_{q^o} &= \rho^2m_1m_2 & \langle X_1, X_1\rangle_{q^o} &= m_1\|\hat i \times \vec q_1^o\|^2 + m_2\|\hat i \times \vec q_2^o\|^2 = \rho^2\\
\langle X_2, X_2\rangle_{q^o} &= \rho^2(m_1\cos^2m_2\fe + m_2\cos^2m_2\fe) & \langle X_3, X_3\rangle_{q^o} &= \rho^2(m_1\sin^2m_2\fe + m_2\sin^2m_2\fe)
\end{align*}
 \[   \langle X_2, X_3\rangle_{q^o} = \rho^2(m_1\cos m_2\fe\sin m_2\fe - m_2\cos m_1\fe\sin m_1\fe), \]
 all other inner products being zero. In terms of $Q$'s co-frame, $\{d\fe, X^j\}$, dual to the above frame, the metric is given by inverting its $\{\del_\fe, X_j\}$ matrix representation:
 \begin{align*}
\langle d\fe, d\fe\rangle_{q^o} &= \frac{1}{\rho^2m_1m_2} & \langle X^1, X^1\rangle_{q^o} &= \frac{1}{\rho^2}\\
\langle X^2, X^2\rangle_{q^o} &= \frac{m_1\sin^2m_2\fe + m_2\sin^2m_2\fe}{m_1m_2\rho^2\sin^2\fe} & \langle X^3, X^3\rangle_{q^o} &= \frac{m_1\cos^2m_2\fe + m_2\cos^2m_2\fe}{m_1m_2\rho^2\sin^2\fe}
\end{align*}
 \[   \langle X^2, X^3\rangle_{q^o} = \frac{m_2\cos m_1\fe\sin m_1\fe - m_1\cos m_2\fe\sin m_2\fe }{m_1m_2\rho^2\sin^2\fe}. \]
 
We identify $T^*Q\cong T^*I\times (\SO_3\times \mathfrak{so}_3^*)$ by \textit{right} translation: $\alpha_g\in T_g^*SO_3\mapsto (g, R_g^*\alpha_g)$. In these coordinates, the symplectic lift of $\SO_3$'s left action on $Q$ is given by: $g\cdot (\fe, p_\fe, h, \mu) = (\fe, p_\fe, gh, Ad_{g^{-1}}^*\mu)$ with associated moment map: $J(\fe, p_\fe, g, \mu) =  \mu$.  The reduced dynamics takes place on the quotient $P_\mu := J^{-1}(\mu)/G_\mu\cong T^*I\times\mathcal{O}_\mu$, realized by:  $(\fe, p_\fe, g, \mu)\mapsto (\fe, p_\fe, Ad_g^*\mu)$. The reduced symplectic form on $P_\mu$ is $dp_\fe\wedge d\fe + \Omega$, where $\Omega_\nu(ad_\xi^*\nu, ad_\eta^*\nu) = \nu([\xi, \eta])$ is the Kirillov-Kostant form on $\mathcal{O}_\mu$.

By left invariance of the two body Hamiltonian, $F$, we have:  $F(\fe, p_\fe, g, \mu) = F(\fe, p_\fe, e, Ad_g^*\mu)$, or letting $\nu = Ad_g^*\mu = \nu_j X^j(q^o)$, and $m = m_1m_2$, we have:  \begin{equation}\label{eq:Fred} F_{red} = \frac{p_\fe^2}{2m\rho^2} - \frac{m}{\rho}\cot\fe + \frac{\nu_1^2}{2\rho^2} + \nu_2^2\frac{m_1\sin^2m_2\fe + m_2\sin^2m_1\fe}{2m\rho^2\sin^2\fe}\end{equation} \[ + \nu_3^2\frac{m_1\cos^2m_2\fe + m_2\cos^2m_1\fe}{2m\rho^2\sin^2\fe} + \nu_2\nu_3\frac{m_2\cos m_1\fe\sin m_1\fe - m_1\cos m_2\fe\sin m_2\fe}{m\rho^2\sin^2\fe}.\]

Now, using that $\frac{\sin^2ax}{\sin^2x}$ is an analytic function around $x=0$, with expansion $a^2 + O(x^2)$, and that the $\nu_2\nu_3$ coefficient is analytic around $\fe = 0$ of $O(\fe)$, and $\nu_1^2 + \nu_2^2 + \nu_3^2 = \|\vec C\|^2$, we have:

\[ F_{red} = \frac{p_\fe^2}{2m\rho^2} + \frac{\nu_3^2}{2m\rho^2\sin^2\fe} - \frac{m}{\rho}\cot\fe + \kappa(\frac{\|\vec C\|^2}{2} - \nu_3^2) + O(\fe).\]

Finally, recall that the Kirillov-Kostant form is given by $\Omega = dp_\theta \wedge d\theta$ when we parametrize $\mathcal{O}_\mu$ -- away from the poles $|p_\theta| = \|\vec C\|$ -- as $(\sqrt{\|\vec C\|^2 - p_\theta^2}\sin\theta, \sqrt{\|\vec C\|^2 - p_\theta^2}\cos\theta, p_\theta) = (\nu_1, \nu_2, \nu_3)$.
\end{proof}

\begin{rmrk}\label{rmrk:eqMass} For equal masses, the expression for $F_{red}$ may be simplified by taking $\psi = \fe/2, p_\psi = 2p_\fe$. One has: $F_{red} = Kep_\kappa + \frac{\tan\psi}{8\rho} + \kappa\frac{\|\vec C\|^2 - p_\theta^2}{2}(1 + (\tan\psi\cos\theta)^2)$, where $Kep_\kappa = \frac{1}{2\rho^2}(p_\psi^2 + \frac{p_\theta^2}{\sin^2\psi}) - \frac{\cot\psi}{8\rho}$. 
\end{rmrk}

\begin{rmrk} In \cite{Shch}, symplectic reduction was also applied to reduce the curved 2-body problems. However, for purposes of comparison, it was not clear to us what representative configuration or slice of the group action was used, which led us to present a reduction here 'from scratch'.  Our choice of representative (fig.~\ref{fig:com}) was motivated by the mass metric being more diagonal, namely requiring: $\langle \del_\fe, X_j\rangle = 0$.
\end{rmrk}

Next, we introduce a small parameter, $\e$, to study the dynamics when $\fe = O(\e)$:

\begin{prop}\label{prop:scal}
The dynamics on the open set $\fe = O(\e)$ may be reparametrized as that of:
$$\hat F_{red} = Kep_{\e^2} +  Per,~~\hat\omega = d\hat L\wedge d\hat\ell + d\hat G\wedge d\hat g$$ where $Kep_{\e^2} = - \frac{m^3}{2\hat L^2} + \frac{\e^2 \hat L^2}{2m}$, $m = m_1m_2$, and $Per = \e^2\left(\frac{\hat C^2}{2} - \hat G^2 + (m_2-m_1)O(\e) + O(\e^2)\right)$.
\end{prop}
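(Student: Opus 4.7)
The plan is to introduce the small parameter via $\e := 1/\rho$ (equivalently $\kappa = \e^2$) and then transform $F_{red}$ into the Delaunay-like action-angle coordinates $(L,\ell,G,g)$ of the Kepler part $Kep_\kappa$ provided by Proposition \ref{prop:DP}.

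First, I would sharpen the $O(\fe)$ remainder obtained in the proof of Proposition \ref{prop:red}. Taylor-expanding the metric coefficients gives
\[ \frac{m_1 \sin^2 m_2\fe + m_2 \sin^2 m_1\fe}{\sin^2\fe} = m + O(\fe^2), \quad \frac{m_2 \cos m_1\fe\sin m_1\fe - m_1 \cos m_2\fe\sin m_2\fe}{\sin^2\fe} = \tfrac{2(m_2-m_1)}{3}\,\fe + O(\fe^3), \]
the asymmetric second coefficient vanishing for equal masses (consistent with Remark \ref{rmrk:eqMass}). Using $\nu_1^2 + \nu_2^2 + \nu_3^2 = \|\vec C\|^2$ and $\nu_3 = p_\theta$ to collect terms, the reduced Hamiltonian refines to
\[ F_{red} = Kep_\kappa + \kappa\bigl(\tfrac{\|\vec C\|^2}{2} - p_\theta^2\bigr) + \kappa\bigl[(m_2-m_1)\,O(\fe) + O(\fe^2)\bigr], \]
where I emphasize that \emph{every} correction beyond $Kep_\kappa$ carries an overall factor of $\kappa = 1/\rho^2$ (since it descended from the curved rotational kinetic energy).

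Next, I would pass to the action-angle coordinates $(L,\ell,G,g)$ of $Kep_\kappa$ and establish the exact formula
\[ Kep_\kappa(L) = -\frac{m^3}{2L^2} + \frac{\kappa L^2}{2m}. \]
I would check this by direct computation on circular orbits: the condition $V'_{\mathrm{eff}}(\fe_0) = 0$ gives $p_\theta^2 = m^2\rho\tan\fe_0$; plugging back into $V_{\mathrm{eff}}$ and using double-angle identities yields $E_{\mathrm{circ}} = -\tfrac{m}{\rho}\cot(2\fe_0)$; substituting $\tan\fe_0 = L^2/(m^2\rho)$ (valid since $L = G = p_\theta$ on circular orbits) and simplifying via $\cot(2\fe_0) = (1-\tan^2\fe_0)/(2\tan\fe_0)$ gives the stated formula. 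Since $Kep_\kappa$ is super-integrable (Proposition \ref{prop:DP}), its energy is a function of $L$ alone, so the circular-orbit identity extends to all bounded non-circular orbits. In the same coordinates $G = p_\theta$ (since $\theta$ is cyclic in $Kep_\kappa$ the planar-Kepler Delaunay $G$ equals the conjugate angular momentum), so the second piece becomes $\kappa(\|\vec C\|^2/2 - G^2)$.

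Finally, setting $\kappa = \e^2$ and relabelling $\hat L = L$, $\hat \ell = \ell$, $\hat G = G$, $\hat g = g$, $\hat C = \|\vec C\|$, the symplectic form $dp_\fe\wedge d\fe + dp_\theta\wedge d\theta$ transports to $d\hat L\wedge d\hat\ell + d\hat G\wedge d\hat g$, and on the open set $\fe = O(\e)$ — which by $\fe \sim L^2/(m^2\rho) = \e L^2/m^2$ for near-circular orbits corresponds to a bounded range of $\hat L, \hat G$ — the bracketed correction reads $(m_2-m_1)\,O(\e) + O(\e^2)$, yielding $Per$ as stated. The main technical hurdle is the exact energy formula: the circular-orbit calculation is elementary, but justifying its validity across all bounded orbits relies on the super-integrability of the curved Kepler problem, whose detailed proof is deferred to the appendix.
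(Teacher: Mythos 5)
Your computations are essentially sound --- the refinement of the $O(\fe)$ remainder to $(m_2-m_1)O(\fe)+O(\fe^2)$, the circular-orbit derivation of $Kep_\kappa = -\tfrac{m^3}{2L^2}+\kappa\tfrac{L^2}{2m}$ (which is just eq.~(\ref{eq:KepL}) of Proposition \ref{prop:DP} and could simply be cited), and the observation that every correction term carries an overall factor of $\kappa$ are all correct (up to a harmless missing factor of $m$ in your expansion of the $\nu_2\nu_3$ coefficient). The gap is in your very first move: you \emph{define} $\e:=1/\rho$, so that ``$\e$ small'' means ``curvature small''. But the proposition, and everything built on it (Theorems \ref{thm:qper} and \ref{thm:per} need ``for all $0<\e<\e_0$''), concerns a \emph{fixed} sphere of curvature $\kappa=1/\rho^2$ on which the two bodies are close: $\e$ is a free parameter measuring the size of the orbit relative to $\rho$, not the curvature itself. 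With your identification $\e$ is frozen by the geometry and cannot be sent to $0$ on a given sphere, so you have only proved the flat-limit version, which the remark following the proposition notes is merely ``included'' as a special case of the scaling.

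The missing idea is the symplectic scaling of the Delaunay actions, $L^2 = \rho\e\hat L^2$, $G^2=\rho\e\hat G^2$, $\|\vec C\|^2 = \rho\e\hat C^2$ (this is what forces $\fe = O(\e)$ on a fixed sphere, since $\tan\frac{\alpha}{\rho} = L^2/(m^2\rho)=\e\hat L^2/m^2$), combined with the rescalings $\hat F_{red} := \rho\e F_{red}$, $\hat\omega := \omega/\sqrt{\rho\e}$, and the time reparametrization $t = (\rho\e)^{3/2}\hat t$ --- this last step is the ``reparametrized'' in the statement, and it is what makes the Hamiltonian flow of $(F_{red},\omega)$ agree, up to the change of time, with that of $(\hat F_{red},\hat\omega)$. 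One then checks $\rho\e\, Kep_\kappa = -\frac{m^3}{2\hat L^2}+\frac{\e^2\hat L^2}{2m}$ and $\rho\e\cdot\kappa\,\nu_i\nu_j = \e^2\,O(\hat C^2)$, and verifies that the mean anomaly is unchanged by comparing $d\ell = \del_L Kep_\kappa\, dt$ with $d\hat\ell = \del_{\hat L}Kep_{\e^2}\, d\hat t$. If you insist on keeping $\e=1/\rho$ as the starting point, you would need an additional similarity argument showing that small orbits on a fixed sphere are conjugate to bounded orbits on a large sphere --- which amounts to carrying out exactly this scaling.
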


\begin{proof} Take Delaunay coordinates (app. \ref{prop:DP}), $(L,G,\ell, g)$,  for the $Kep_\kappa$ term of $F_{red}$.  Let $L^2 =: \rho\e\hat L^2, G^2 =: \rho\e\hat G^2, \|\vec C\|^2 =: \rho\e \hat C^2$, so that $\fe = O(\e)$. The dynamics of $\hat F_{red} := \rho\e F_{red}$ with symplectic form $\hat\omega := d\hat L\wedge d\ell + d\hat G\wedge dg = \omega/\sqrt{\rho\e}$, corresponds to the time reparametrization: $(\rho\e)^{3/2}\hat t = t$, where $\hat t$ is the new time. Note that the mean anomaly, $\hat\ell$, for $Kep_{\e^2}$ is the same as the mean anomaly, $\ell$, for the $Kep_{\kappa}$ term, as can be easily checked by comparing $d\ell = \del_LKep_\kappa~dt$ and $d\hat\ell = \del_{\hat L} Kep_{\e^2}~d\hat t$.
\end{proof}

\begin{rmrk} The scaling above includes the possibility of taking $\e = 1/\rho$, and in place of imagining the bodies as close, view their distance remaining bounded as the curvature of the space goes to zero.
\end{rmrk}

\begin{rmrk}
When the curvature is negative, eq. (\ref{eq:Fred}) has the trigonometric functions replaced by their hyperbolic counterparts, and with   $\mathfrak{so}_{2,1}^*$'s coadjoint orbits:  $\nu_1^2 + \nu_2^2 - \nu_3^2 = \|\vec C\|_{2,1}^2$ replacing the coadjoint orbits of $\mathfrak{so}_3^*$.  One arrives at $\hat F_{red} = Kep_{-\e^2} + \e^2(\frac{\|\hat C\|_{2,1}^2}{2} + \hat G^2 + (m_2-m_1)O(\e) + O(\e^2))$.
\end{rmrk}

\section{Secular dynamics}
\label{sec:SecDyn}

The \textit{secular} or averaged dynamics refers to the dynamics of the Hamiltonian: \[\langle F_{red}\rangle := \frac{1}{2\pi}\int_0^{2\pi} F_{red}(L,\ell, G, g)~d\ell. \] The secular dynamics is integrable, $L$ being a first integral. We will use the brackets to denote a functions averaged over $\ell$, namely $\langle f\rangle = \frac{1}{2\pi}\int_0^{2\pi} f~d\ell$. The dynamical relevance of the secular Hamiltonian is due to its appearance in a series of 'integrable normal forms' for the reduced curved 2-body dynamics:

\begin{prop} Consider the scaled Hamiltonian, $(\hat F_{red}, \hat\omega)$, of proposition \ref{prop:scal}. For each $k\in\N$, there is a symplectic change of coordinates, $(\psi^k)^*\hat\omega = \hat\omega$, with:
\[\hat F_{red}\circ\psi^k = Kep_{\e^2} +  \langle Per\rangle + \langle F^1\rangle + ... + \langle F^{k-1}\rangle + F^{k}\]
and each $F^j = O(\e^{2j+3})$. When the masses are equal, $F^j = O(\e^{2j + 4})$.
\end{prop}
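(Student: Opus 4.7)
The plan is to construct $\psi^k$ iteratively via a sequence of Lie series transforms, each one designed to cancel the $\hat\ell$-dependent part of the current perturbation modulo corrections of strictly higher order in $\e$. This works cleanly because $Kep_{\e^2}$ depends only on the action $\hat L$, so its Hamiltonian vector field acts on the angle $\hat\ell$ with the nonzero Kepler frequency $n(\hat L,\e) := \partial_{\hat L} Kep_{\e^2} = m^3/\hat L^3 + \e^2\hat L/m$, and the associated homological equation reduces to an elementary ODE in $\hat\ell$.

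I begin by splitting $Per = \langle Per\rangle + \widetilde{Per}$ into its $\hat\ell$-average and zero-mean oscillating part. Since the $O(\e^2)$ leading term of $Per$ in Proposition \ref{prop:scal} depends only on $\hat C$ and $\hat G$ and is therefore $\hat\ell$-independent, one has $\widetilde{Per} = O(\e^3)$ in general, and $\widetilde{Per} = O(\e^4)$ when $m_1 = m_2$ (the $(m_2-m_1)$ factor then killing the lowest oscillating term). The homological equation $\{Kep_{\e^2}, \chi_1\} = \widetilde{Per}$ becomes $-n\,\partial_{\hat\ell}\chi_1 = \widetilde{Per}$, which has a unique zero-mean $\hat\ell$-periodic solution of the same order as $\widetilde{Per}$; thus $\chi_1 = O(\e^3)$ (resp.~$O(\e^4)$). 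Taking $\psi^1$ to be the time-one flow of the Hamiltonian vector field of $\chi_1$ (symplectic by construction), the Lie series
\begin{equation*}
\hat F_{red}\circ\psi^1 \;=\; \hat F_{red} + \{\chi_1,\hat F_{red}\} + \tfrac12\{\chi_1,\{\chi_1,\hat F_{red}\}\} + \cdots
\end{equation*}
cancels $\widetilde{Per}$ against $\{\chi_1, Kep_{\e^2}\}$, and the bracket-order estimate $\{O(\e^a),O(\e^b)\} = O(\e^{a+b})$ gives the new remainder $F^1 = \{\chi_1,\langle Per\rangle\} + O(\e^6)$, of order $\e^5$ (resp.~$\e^6$), matching the claim at $j=1$.

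The inductive step is the same recipe. Suppose $\psi^{k-1}$ has been constructed with $\hat F_{red}\circ\psi^{k-1} = Kep_{\e^2} + \langle Per\rangle + \langle F^1\rangle + \cdots + \langle F^{k-2}\rangle + F^{k-1}$ and $F^{k-1} = O(\e^{2k+1})$. Split $F^{k-1} = \langle F^{k-1}\rangle + \widetilde{F^{k-1}}$, solve $\{Kep_{\e^2},\chi_k\} = \widetilde{F^{k-1}}$ by the same ODE to get $\chi_k = O(\e^{2k+1})$, and set $\psi^k := \psi^{k-1}\circ\phi^1_{\chi_k}$. The Lie series expansion of $(\hat F_{red}\circ\psi^{k-1})\circ\phi^1_{\chi_k}$ then puts $\widetilde{F^{k-1}}$ into normal form as $\langle F^{k-1}\rangle$ and leaves a remainder $F^k$ dominated by $\{\chi_k,\langle Per\rangle\} = O(\e^{2k+3})$, with all iterated brackets and cross-brackets $\{\chi_k,\langle F^j\rangle\}$ of order at least $\e^{2k+4}$. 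In the equal-mass case, every $\widetilde{F^j}$ and hence every $\chi_j$ picks up one extra factor of $\e$, shifting all estimates up by one throughout the induction.

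The only point requiring care is the order bookkeeping in the Lie series: one must verify that at each step no iterated Poisson bracket $\{\chi_k,\cdots\{\chi_k,\hat F_{red}\circ\psi^{k-1}\}\cdots\}$ drops below the claimed order $\e^{2k+3}$. This reduces to the bracket-order estimate above combined with the fact that $\langle Per\rangle = O(\e^2)$ is the lowest-order term beyond $Kep_{\e^2}$ available to bracket against $\chi_k$. There are no small-divisor obstructions since $n(\hat L,\e) = O(1)$ is bounded away from zero on any compact subset of the open set where the Delaunay chart of Proposition \ref{prop:DP} is well defined, so dividing by $n$ in the solution of the homological equation does not degrade the order and the flows $\phi^1_{\chi_k}$ all exist on that region.
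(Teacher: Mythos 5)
Your proposal is correct and follows essentially the same route as the paper: split $Per$ into its $\hat\ell$-average and zero-mean part, solve the homological equation $\hat n\,\partial_{\hat\ell}\chi = \pm\widetilde{Per}$ (no small divisors since $\hat n = \del_{\hat L}Kep_{\e^2}$ is bounded away from zero), take $\psi^1$ to be the time-one flow of $\chi$, and iterate with the bracket-order bookkeeping you describe. The only cosmetic difference is that you expand via the full Lie series while the paper uses a second-order Taylor expansion with integral remainder and an integration by parts to exhibit $F^1 = \int_0^1\{\langle Per\rangle + t\widetilde{Per},\chi\}\circ\psi^t\,dt = O(\e^5)$; both yield the same estimates.
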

\begin{proof} The iterative process to determine the sequence of $\psi^k$ and $F^k$'s is not new (see \cite{FejQP}). For the first step, let $\psi^1$ be the time 1-flow of a 'to be determined' Hamiltonian $\chi$. By Taylor's theorem:
\[ \hat F_{red}\circ \psi^1 = Kep_{\e^2} + Per + \{Kep_{\e^2}, \chi\} + \{Per, \chi\} + \int_0^1 (1-t)\frac{d^2}{dt^2}(\hat F_{red}\circ\psi^t)~dt.\]
Let $\tilde Per := Per - \langle Per\rangle$. Note that, while $Per = O(\e^2)$, we have $\tilde Per = O(\e^3)$. Taking $\hat n \chi := \int_0^\ell \tilde Per(\hat L, l, \hat G, g)~dl,$ where $\hat n = \del_{\hat L}Kep_{\e^2}$, we have: \[ Per + \{Kep_{\e^2}, \chi\} = \langle Per\rangle,~~\chi = O(\e^3),\] so that for $\e$ sufficiently small $\psi^1$ is defined. Finally, integrating by parts, one has \[ F^1 := \{Per, \chi\} + \int_0^1 (1-t)\frac{d^2}{dt^2}(\hat F_{red}\circ\psi^t)~dt = \int_0^1 \{\langle Per\rangle + t\tilde Per, \chi\}\circ \psi^t~dt = O(\e^5).\]
\end{proof}

\begin{rmrk}\label{rmrk:kSec} The truncated, or \textit{$k$'th order secular system}: $Sec^k:=Kep_{\e^2} +  \langle Per\rangle + \langle F^1\rangle + ... + \langle F^{k-1}\rangle$ is integrable, since it does not depend on $\ell$. If the sequence $\psi^k$ converged, the curved 2-body problem would be integrable, although by \cite{Shch} we know there is no such convergence. For each fixed $k$ however, the $k$'th order secular system approximates the true motion over long time scales in a region of configurations with the 2-bodies sufficiently close.
\end{rmrk}

In appendix \ref{app:Per}, we compute an expansion of $\langle Per\rangle$ in powers of $\e$, eq. (\ref{eq:PerExp}). Via this expansion, one finds two unstable periodic orbits (red in fig.~\ref{fig:orbs}), whose stable and unstable manifolds coincide for the integrable secular dynamics. The complicated dynamics which would result from the splitting of these manifolds is a likely mechanism responsible for the non-integrability of the curved 2-body problem.

By ignoring the $O(\fe)$ terms in $F_{red}$, the Keplerian orbits experience precession (see fig.~\ref{fig:orbs}): \[ \dot G = 0,~~\dot g = - 2\kappa G.\] When the curvature is positive, the conic precesses in a direction opposite to the particles (fast) motion around the conic, while when the curvature is negative the precession occurs in the same direction as the particles motion. We will check that for $\fe$ sufficiently small, many such orbits survive the perturbation of including the $O(\fe)$ terms: they may be continued to orbits of the reduced curved 2-body problem and then lifted to orbits of the curved 2-body problem.

\subsection{Continued orbits}

Applying a KAM theorem from \cite{Fej}, establishes certain quasi-periodic motions of the reduced problem. Introducing some notation, for $\gamma>0, \tau>1$, let \[ D_{\gamma,\tau}:= \{ v\in\R^2~:~|\mathbf{k}\cdot v|\ge \frac{\gamma}{(|k_1|+|k_2|)^\tau}, \forall \mathbf{k} = (k_1,k_2)\in\Z^2\backslash \mathbf{0}\} \] be the $(\gamma,\tau)$-\textit{Diophantine frequencies}. Take \[\mathbf{\hat B}:= \{ (\hat L, \hat G) ~:~ 0 < \hat G \le \hat L < 1\} \] and let \[(I,\theta) = (I_1,I_2,\theta_1, \theta_2) = (\hat L, \hat G, \ell, g) + O(\e^3)\] be action-angle variables for some $k$th order secular system, with $\mathbf{B} = \{ (I_1, I_2) :  (\hat L, \hat G)\in \mathbf{\hat B}\}\subset \R^2$ equipped with the Lebesgue measure, $Leb$, restricted to $\mathbf{B}$. 

The reduced curved 2-body problem has, for $\fe$ sufficiently small,  a positive measure of invariant tori along each of which the motions are quasi-periodic. More precisely:

\begin{theorem}\label{thm:qper}
Fix $\tau >1, \hat\gamma >0, m>2$ and set $\gamma := \e^m\hat\gamma$. There exists $\e_0>0$ such that for each $0<\e<\e_0$, we have:

(i) A positive measure set, $d_{\gamma, \tau}\subset \mathbf{B}$,

(ii) for each $I^o\in d_{\gamma,\tau}$, there is a local symplectic change of coordinates, $\psi^o$, for which $\psi^o(I^o,  \theta)$ is an invariant torus of $\hat F_{red}$ with frequency in $D_{\gamma,\tau}$.

(iii) As $\e\to 0$, $Leb(d_{\gamma,\tau})\to Leb(\mathbf{B})$ and $\psi^o\to id$ (in the Whitney $C^\infty$ topology).
\end{theorem}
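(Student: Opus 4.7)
The plan is to apply the KAM theorem from \cite{Fej} to the $k$-th order secular normal form $\hat F_{red}\circ\psi^k = Sec^k + F^k$ of the previous proposition, choosing $k$ large enough (depending on $m$ and $\tau$) so that the remainder $\|F^k\| = O(\e^{2k+3})$ dominates the expected loss from both small divisors (a factor $\gamma^{-(2\tau+2)}$) and from the proper degeneracy (an extra $\e^{-2}$, see below). The action--angle variables $(I,\theta)$ for $Sec^k$ are already provided in the statement, so what remains is to verify the non-degeneracy hypothesis of the cited KAM theorem for the frequency map $\omega(I) := \partial_I Sec^k(I)$, and to track the $\e$-dependence of all estimates.

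From the expansion supplied by proposition~\ref{prop:scal}, namely $Sec^k(I) = Kep_{\e^2}(I_1) + \e^2(\tfrac{\hat C^2}{2} - I_2^2) + O(\e^3)$, the frequency map is
\[ \omega(I) = \Bigl( \partial_{I_1}Kep_{\e^2}(I_1) + O(\e^2),\; -2\e^2 I_2 + O(\e^3)\Bigr), \]
exhibiting the familiar proper degeneracy: a fast Keplerian frequency of order $1$ and a slow precessional frequency of order $\e^2$. The Hessian $\partial\omega/\partial I$ is block diagonal to leading order with entries $\partial_{I_1}^2 Kep_{\e^2}$ (of order $1$) and $-2\e^2$, hence invertible on $\mathbf{B}$ with determinant bounded below by a constant multiple of $\e^2$. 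After the rescaling $\tilde I_2 := \e^{-2} I_2$ (and the corresponding time rescaling) the system takes the standard form with twist of order $1$, to which the version of KAM in \cite{Fej} -- formulated precisely for the properly degenerate Hamiltonians of celestial mechanics -- applies directly.

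The KAM theorem then produces the Cantor set $d_{\gamma,\tau}\subset \mathbf{B}$ of actions whose frequencies lie in $D_{\gamma,\tau}$, together with the family of near-identity symplectic embeddings $\psi^o$ conjugating the $\hat F_{red}$-dynamics on each persistent torus $\psi^o(\{I^o\}\times\T^2)$ to the linear flow of the prescribed frequency, with $\psi^o$ depending on $I^o$ in Whitney $C^\infty$ class and $\psi^o\to id$ as $\e\to 0$. The standard measure estimate $Leb(\mathbf{B}\setminus d_{\gamma,\tau}) = O(\gamma/\e^2) = O(\e^{m-2})\to 0$ (the $\e^{-2}$ reflecting the rescaling of the slow direction) then yields (iii). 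The main obstacle I expect is verifying this quantitative balance: $F^k$ must be smaller than the Diophantine loss times the $\e^2$-twist, leading to a threshold of the form $2k+3 > 2m(\tau+1) + 2$ on the required order of the normal form. Since $m>2$ and $\tau>1$ are fixed, this is always attainable by choosing $k$ large enough, but the precise bookkeeping within the hypotheses of \cite{Fej} for a properly degenerate system with $\e$-dependent Diophantine parameter is where the real technical content lies.
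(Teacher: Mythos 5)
Your proposal is correct and follows the same overall strategy as the paper---average to a high enough order $k$ that the non-integrable remainder is negligible, invoke the KAM theorem of \cite{Fej}, and get the measure estimate from $m>2$---but it routes through the KAM machinery differently in two places. First, you verify Kolmogorov non-degeneracy of the frequency map and rescale the slow action to restore an order-one twist; the paper instead invokes the ``hypothetical conjugacy'' form of the theorem (\cite{Fej}, Thm.~15 and Remark~21), which produces for \emph{every} $s=I^o$ a frequency $\alpha_s$ close to $\alpha_s^o=dSec^k(I^o)$ together with a conjugacy valid \emph{provided} $\alpha_s\in D_{\gamma,\tau}$, so no twist condition is needed for part (ii) at all. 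Non-degeneracy enters only in the parameter-selection step (\cite{Fej}, Cor.~29), where $\alpha_s=O(1,\e^2)$ and $m>2$ give $Leb(\mathbf{B}\backslash d_{\gamma,\tau})=O(\e^{(m-2)/\mu})$; this is essentially your $O(\gamma/\e^2)$ heuristic, the worst resonances being $\mathbf{k}=(0,k_2)$, which exclude a band of actions of width of order $\gamma/\e^2=\hat\gamma\,\e^{m-2}$. Second, your threshold $2k+3>2m(\tau+1)+2$ on the order of averaging is more conservative than necessary: the smallness hypothesis in \cite{Fej} is just $|F_s-F_s^o|=o(\gamma^2)=o(\e^{2m})$, the $\tau$- and degeneracy-dependence being absorbed into $\e$-independent constants, so $2k>2m-3$ suffices. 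Since $m$ and $\tau$ are fixed, either choice of $k$ is attainable and nothing is lost, but deferring all non-degeneracy to the Diophantine-parameter selection is exactly what makes the $\e$-dependent constant $\gamma=\e^m\hat\gamma$ painless here.
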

\begin{proof} We verify the hypotheses of \cite{Fej}.  Let $(I,\theta)$ be action-angle variables for a $k$'th order secular system (remark \ref{rmrk:kSec}) with $2k>2m-3$. Setting $s := I^o\in \mathbf{B}$, and $r := I - I^o$, we have by Taylor expansion:
\[ F_s^o := Sec^k = c_s^o + \alpha_s^o\cdot r + O(r^2) \] where $c_s^o := Sec_k(I^o), \alpha_s^o := dSec^k(I^o) = O(1, \e^2)$. Likewise, we take $F_s:=\hat F_{red}(r,\theta)$ around $s = I^o$. Since we have chosen $2k + 1>2m$, we have \[|F_s - F_s^o| = o(\gamma^2)\] for $\e$ sufficiently small. So  (\cite{Fej} Thm.~15 and remark 21) there exists a map $\mathbf{B}\ni s\mapsto \alpha_s\in \R^2$, with $|\alpha_s - \alpha_s^o|<<1$ in the $C^\infty$-Whitney topology such that \textit{provided} $\alpha_s\in D_{\gamma,\tau}$, we have $\hat F_{red}\circ\psi^o = c_s + \alpha_s\cdot r + O(r^2;\theta)$ for some local symplectic map $\psi^o$. It remains to check that $d_{\gamma,\tau}:=\{ s~:~\alpha_s\in D_{\gamma,\tau}\}$ is non-empty. Since $m>2$ and $\alpha_s = O(1, \e^2)$ we have (\cite{Fej} Cor.~29) \[ Leb(\mathbf{B}\backslash d_{\gamma,\tau}) = O(\e^{\frac{m-2}{\mu}})\] for some constant $\mu>0$. So indeed, for $\e$ sufficiently small, $d_{\gamma,\tau}$ has positive measure and is non-empty.
\end{proof}

\begin{rmrk} As the reduced dynamics has two degrees of freedom, there is KAM-stability in the $\fe<<1$ regime: the invariant tori form barriers in energy level sets.
\end{rmrk}

One may also apply the implicit function theorem to establish periodic orbits of long periods for the reduced dynamics:

\begin{theorem}\label{thm:per}
Let $m,n\in\N$. Then for $m$ sufficiently large, there exist periodic orbits of the reduced curved 2-body problem for which the bodies make $m$ revolutions about their osculating conics, while the osculating conic precesses around the origin $n$ times.
\end{theorem}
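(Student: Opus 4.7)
The plan is to run a Poincar\'e continuation argument on a high-order normal form, with the implicit function theorem identifying fixed points of a Poincar\'e return map. Throughout I use $m, n$ for the integers of the statement, distinct from the mass product $m_1 m_2$ appearing in the Kepler term. Using the normal form proposition of section~\ref{sec:SecDyn}, fix $k \in \N$ large and work in the coordinates in which $\hat F_{red}\circ \psi^k = Sec^k + F^k$ with $F^k = O(\e^{2k+3})$, so that $Sec^k$ is integrable with action-angle variables $(\hat L, \hat G, \ell, g)$ up to an $O(\e^3)$ change of coordinates. Its frequencies are $\omega_\ell(\hat L, \hat G) = \partial_{\hat L} Sec^k = (m_1 m_2)^3/\hat L^3 + O(\e^2)$ and $\omega_g(\hat L, \hat G) = \partial_{\hat G} Sec^k = O(\e^2)$, the latter coming entirely from $\langle Per\rangle$ and having non-trivial $(\hat L, \hat G)$-dependence by the expansion~\eqref{eq:PerExp}.

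First I would exhibit the resonant tori of $Sec^k$ realising the desired $(m,n)$-behaviour. Since $\omega_g/\omega_\ell = O(\e^2)$ varies non-constantly on $\mathbf{\hat B}$, it sweeps an open interval accumulating at $0$; hence for $n$ fixed and the integer $m$ sufficiently large, $n/m$ lies in this interval, and the resonance equation $n\,\omega_\ell = m\,\omega_g$ has a solution $(\hat L^\ast, \hat G^\ast)\in\mathbf{\hat B}$ at which $\partial_{\hat G}(\omega_g/\omega_\ell) \ne 0$ (the twist condition). Every $Sec^k$-orbit on the resonant torus $\{\hat L = \hat L^\ast,\ \hat G = \hat G^\ast\}$ is periodic of common period $T^\ast = 2\pi m/\omega_\ell^\ast = 2\pi n/\omega_g^\ast$, realising exactly $m$ revolutions in $\ell$ and $n$ in $g$.

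To continue one of these orbits to $\hat F_{red}$, I would take the Poincar\'e section $\Sigma = \{\ell \equiv 0\}$ on the energy level of the resonant torus, with residual coordinates $(\hat G, g)$, and let $P$ denote the return map of $\hat F_{red}$. Under $Sec^k$ alone, $P_0^m$ is the identity on $T^\ast\cap\Sigma$ (modulo the shift $g\mapsto g + 2\pi n$), so to leading order the perturbed displacement $P^m - P_0^m$ is the symplectic gradient of a smooth reduced action $\mathcal{S}(\hat G, g;\e)$ obtained by integrating the perturbation $F^k$ along the unperturbed periodic orbit. Non-degenerate critical points of $\mathcal{S}$ then correspond, by the implicit function theorem, to fixed points of $P^m$ and hence to the sought periodic orbits of $\hat F_{red}$.

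The technical heart, and the reason $m$ must be taken sufficiently large, is the non-degeneracy of $\mathcal{S}$: the twist handles the $\hat G$-direction, and the $g$-direction reduces to showing that the leading $g$-dependent Fourier harmonic of the averaged perturbation along the resonant orbit is non-zero. I would extract this harmonic from the explicit expansion~\eqref{eq:PerExp} of $\langle Per\rangle$ (and, if necessary, the first correction $\langle F^1\rangle$), whose coefficients are trigonometric polynomials in the Delaunay angles, and verify that it does not vanish identically. Once this Morse property is in hand, the implicit function theorem yields at least two non-degenerate periodic orbits per resonance for all sufficiently large $m$, completing the continuation to the reduced curved two-body flow.
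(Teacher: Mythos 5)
Your proposal takes a genuinely different route from the paper, and the difference is where the trouble lies. The paper does not continue periodic orbits off a resonant torus of a high-order normal form with the remainder as the perturbation. Instead it couples the integer $m$ to the small parameter by setting $1/\e^2 = m$, passes to the iso-energetically reduced slow equations $\tfrac{d\hat G}{d\ell} = \e^3\mathfrak{m}\sin g + O(\e^4)$, $\tfrac{dg}{d\ell} = -2\e^2\hat G + O(\e^3)$ read off from \eqref{eq:PerExp}, and applies the implicit function theorem in $\e$ to the two closing conditions of a single long-time return map over $\ell\in[0,2\pi/\e^2]$. The condition that $\hat G$ return to itself is there governed by the \emph{secular} term $\e^3\mathfrak{m}\sin g$ of $\langle Per\rangle$, which is visible at low order precisely because the orbit's period is tied to the secular time scale $1/\e^2$; ``$m$ large'' and ``$\e$ small'' are the same statement. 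Your existence of resonant tori for large $m$ and the twist $\partial_{\hat G}(\omega_g/\omega_\ell)\ne 0$ are fine, but the continuation mechanism you then invoke is a different one.

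The gap is at what you call the technical heart. In your setup the integrable system is $Sec^k$ and the perturbation is the remainder $F^k$, so the reduced action $\mathcal{S}$ whose critical points select the surviving orbits is the average of $F^k$ over the resonant periodic orbit; its dependence on the initial phase involves only the Fourier coefficients of $F^k$ supported on the resonance module generated by $(n,-m)$. The $\cos g$ harmonic of $\langle Per\rangle$ in \eqref{eq:PerExp}, which you propose to extract, cannot supply this: first, it is part of the integrable system $Sec^k$, already absorbed into the frequencies, and contributes nothing to $\mathcal{S}$; second, even in a lower-order splitting where it sits in the perturbation, it is supported on the harmonic $(0,\pm 1)$ in $(\ell,g)$, which lies in the resonance module only when $n=0$ --- its average over an $(m,n)$-resonant orbit with $n\ge 1$ vanishes identically. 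What actually drives the phase dependence of $\mathcal{S}$ are the coefficients of $F^k$ at the harmonics $e^{\pm i(n\ell - mg)/\gcd(m,n)}$ and their multiples, which are of order at least $m/n$ in $g$, hence exponentially small in $m$ by analyticity and completely uncontrolled by the expansions in the paper. The Morse property you need therefore cannot be ``extracted from \eqref{eq:PerExp}'' and the argument cannot be completed as written; to repair it you should tie $m$ to $\e$ and solve the two periodicity conditions directly, as the paper does, rather than relying on the resonant average of the normal-form remainder.
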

\begin{proof}
We will consider non-equal masses, a similar argument applies for equal masses. After iso-energetic reduction, the equations of motion for $\hat G, g$ are eq. (\ref{eq:PerExp}): 
\[ \frac{d\hat G}{d\ell} = \e^3\mathfrak{m} \sin g + O(\e^4),~~~\frac{dg}{d\ell} = -2\e^2\hat G + O(\e^3),\] where $\mathfrak{m}(\hat L, \hat G)\ne 0$ for non-circular motions. Consider the 'long time return map':
\[ \overline P_\e (G^o, g^o) :=  \int_0^{2\pi/\e^2} (\frac{1}{\e}\frac{d\hat G}{d\ell}, ~\frac{dg}{d\ell})~d\ell,\] the integral taken over an orbit with initial condition $G^o, g^o$.  When $\overline P_\e(G^o, g^o) = (0, -2\pi n)$ and $1/\e^2 = m\in\N$, we have a periodic orbit as described in the theorem. The map $\overline P_\e$ is analytic in $\e$, with
 \[\overline P_0(G^o, g^o) = (\mathfrak{m}\sin g^o, ~ - 2G^o).\]
By the implicit function theorem, $g^o\equiv 0~\mod \pi, G^o = \pi n$, continues to solutions of $\overline P_\e(G^\e, g^\e) = (0, -2\pi n)$ for $0<\e<\e_o$, yielding periodic orbits of long periods for those $\e$ with  $ 1/\e^2 = m\in\N$.
\end{proof}

\begin{rmrk}
When the curvature is positive the precession is, as with the quasi-periodic motions, in the opposite direction to the particles motion around the conic, while for negative curvature in the same direction.
\end{rmrk}

\subsection{Lifted orbits}

The continued orbits are the image under a near identity symplectic transformation of certain orbits of $F_{red}$ with the $O(\fe)$ terms neglected: \[ H_{red} :=  Kep_\kappa + \kappa(\frac{\|\vec C\|^2}{2} - G^2). \] We describe how the precessing Keplerian orbits of $H_{red}$ lift to $T^*Q$.

In the reduced space, $T^*I\times\mathcal{O}_\mu$, the orbits of $H_{red}$ are given by:
\[ (p_\fe(t), \fe(t), Ad_{g(t)}^*\mu)\]
where $g(t) = R_{\hat i}(\lambda)R_{\hat k}(\theta(t))$ is a rotation by $\theta(t)$ about the $\hat k$-axis followed by a rotation by $\lambda$ about the $\hat i$-axis. Moreover, $(\fe(t), \theta(t))$ describe a precessing orbit of the curved Kepler problem having $G = \|\vec C\|\cos \lambda$ for its angular momentum.

We take $\vec C = C\hat k$ (fig.~\ref{fig:com}), so that the isotropy is by rotations about the $\hat k$-axis. The ambiguity in the reduced orbit is $Ad_{g(t)}^*Ad_{h(t)}^*\mu$ where $h(t) = R_{\hat k}(\omega(t))$. Hence the lifted orbit, on $T^*Q$, is of the form:
\[(*)~~~ (\fe(t), p_\fe(t),  h(t)g(t), \mu).\] 

The Hamiltonian $H_{red}$ is the symplectic reduction of $H$ on $T^*Q$ given by $H(\fe, p_\fe, g, \mu) = H_{red}(\fe, p_\fe, Ad_g^*\mu)$. Requiring the lifted curve $(*) $ to satisfy the equations of motion of $H$, imposes the condition: \[\dot\omega = \kappa \|\vec C\|.\]

In summary, a precessing Keplerian orbit with angular momentum $G$ lifts to an orbit with angular momentum $\vec C = C\hat k$ on $Q$ where the two bodies follow precessing Keplerian orbits centered (we use the notion of 'center' from fig.~\ref{fig:com} here) on the colatitude, $\lambda$, satisfying $C\cos\lambda = G$. The whole system is rotated with angular speed $\kappa C$ about the $\hat k$ axis. 

\begin{rmrk}
As $\lambda\to \pi/2$, the center of the system approaches the equator and the bodies move along more eccentric conic sections until at the equator we have collision orbits. The rate of precession decreases as one moves closer to the equator, where the collision orbits cease to precess (see remark \ref{rmrk:split}, for a finer description of the equatorial behaviour). When $C > L$, there is a maximal value of $\lambda$, and such orbits are constrained to bands around the equator.
\end{rmrk}

The true motions of the curved 2-body problem established in the previous section are qualitatively the same as such lifted orbits of $H_{red}$, performing small oscillations around them.



\appendix

\section{Action-angle coordinates for the curved Kepler problem}
\label{app:DelPoi}

\subsection{Curved conics}

The Kepler problem on a sphere of radius $\rho$ with a fixed 'sun', $q_s$, of mass $M$ and particle, $q\in S^2$, of mass $m$ is given by the Hamiltonian flow of:
\[    Kep_\kappa := \frac{\|p\|_\kappa^2}{2m} - \frac{mM}{\rho}\cot\fe\] on $T^*(S^2\backslash \{|\cot\fe| = \infty\})$. Here $\|\cdot \|_\kappa$ is the norm induced by the metric of constant curvature $\kappa = 1/\rho^2$ on $S^2$ and $\fe$ is the angular distance from $q_s$ to $q$ (see figure \ref{fig:CentProj}). For a negatively curved space, one replaces $\cot\fe$ with $\coth\fe$.

Because the force is central, letting $\vec q\in\R^3$ be the position of the particle from the center, $c$, of the sphere and $\hat k$ a unit vector from $c$ to $q_s$, the \textit{angular momentum}: \begin{equation}\label{eq:am}
    G:=m(~\vec q\times \dot{\vec q}~)\cdot \hat k
\end{equation} is a first integral. It corresponds to the rotational symmetry about the $\Vec{cq_s}$ axis.

\begin{rmrk}\label{rem:coords}
In spherical coordinates, $\vec q = \rho (\sin\fe\cos\theta, \sin\fe\sin\theta, \cos\fe)$, we have:
$\|p\|_\kappa^2 = \frac{1}{\rho^2}(p_\fe^2 + \frac{p_\theta^2}{\sin^2\fe})$, and $G = m\rho^2\sin^2\fe\dot\theta = p_\theta$.
In these coordinates, it is not hard to show analytically that the curved Kepler trajectories centrally project to flat Kepler orbits (fig.~\ref{fig:CentProj}). Indeed,  setting $R := 1/r = \frac{\cot\fe}{\rho}$, one finds $\frac{d^2R}{d\theta^2} + R = m^2M/G^2$, so that the $\theta$-parametrized orbits are: $r = \frac{G^2/m^2M}{1 + e\cos(\theta - g)}$, with $e$ and $g$ being constants of integration. Here $\nu := \theta - g$ is the true anomaly and $g$ is the \textit{argument of pericenter}.
\end{rmrk}

The curved Kepler trajectories are in fact  conic sections on the sphere having a focus at $q_s$ (see fig.~\ref{fig:curvedConic}). One may express energy and momentum in terms of geometric parameters of such spherical conics.

\begin{figure}[!htb]
\centering
\includegraphics[scale = .4]{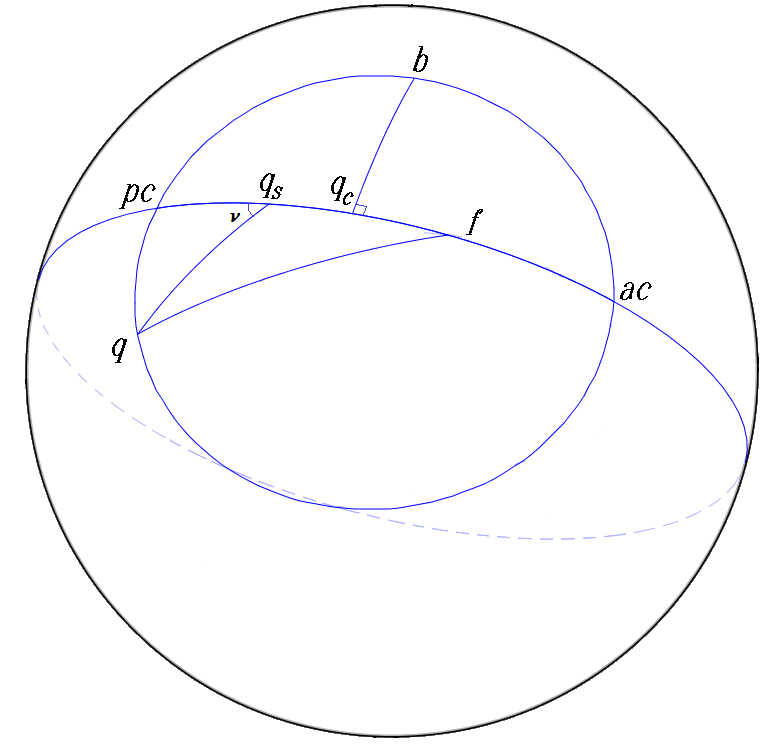}
\caption{\footnotesize{An ellipse on the sphere, with foci at $q_s$ and $f$ is the set of points $q$ for which $|qq_s| + |qf| = 2\alpha$ is constant. We call $\alpha$ its \textit{semi-major axis}, the midpoint, $q_c$, between the foci its \textit{center}, and the length $|bq_c| =:\beta$ its \textit{semi-minor axis}. The number, $\epsilon$, for which $|q_sq_c| = \alpha\epsilon$ is called the \textit{eccentricity}. When $\epsilon\ne 0$, the closest point to $q_s$ is called the \textit{pericenter}, $pc$, and furthest, $ac$, the \textit{apocenter}. The angle $\nu := \angle(pc,q_s,q)$ is called the \textit{true anomaly} of $q$.}}\label{fig:curvedConic}
\end{figure}

\begin{prop}\label{prop:geomcoords}(see \cite{Al})\label{prop:enmom} Consider an orbit of the curved Kepler problem along a spherical conic with a focus at $q_s$ (fig.~\ref{fig:curvedConic}). Let $\alpha$ be the semi-major axis and $\beta$ the semi-minor axis of this conic. The orbit has energy and momentum:
\[Kep_\kappa = -\frac{mM}{\rho}\cot\frac{2\alpha}{\rho},\]
\[G^2 = m^2M\rho\tan^2\frac{\beta}{\rho}\cot\frac{\alpha}{\rho}.\]
\end{prop}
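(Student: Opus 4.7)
The plan is to identify the pericenter and apocenter as the two turning points of the radial motion and exploit that they determine the orbit's conserved quantities in two ways: kinematically through Vieta's formulas applied to the energy equation, and geometrically through the defining properties of the spherical ellipse. First I would observe that at both the pericenter and apocenter the radial momentum vanishes ($p_\fe=0$), so energy conservation becomes
\[
E \;=\; \frac{G^2}{2m\rho^2\sin^2\fe}-\frac{mM}{\rho}\cot\fe \;=\; \frac{G^2}{2m\rho^2}(1+u^2) - \frac{mM}{\rho}u, \qquad u:=\cot\fe,
\]
using $1/\sin^2\fe=1+\cot^2\fe$. Consequently $\cot\fe_{pc}$ and $\cot\fe_{ac}$ are the two roots of a quadratic in $u$, and Vieta's formulas give
\[
\cot\fe_{pc}+\cot\fe_{ac}=\frac{2m^2M\rho}{G^2},\qquad \cot\fe_{pc}\cdot\cot\fe_{ac}=1-\frac{2m\rho^2 E}{G^2}.
\]

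Next I would compute these same symmetric functions geometrically from the spherical ellipse. Two inputs are needed. The focal-sum property applied along the major axis (where the order of points is pericenter, $q_s$, center, second focus, apocenter) yields $\rho\fe_{pc}=\alpha(1-\epsilon)$ and $\rho\fe_{ac}=\alpha(1+\epsilon)$, whence $\fe_{pc}+\fe_{ac}=2\alpha/\rho$ and $\fe_{ac}-\fe_{pc}=2\alpha\epsilon/\rho$. The second input is obtained by applying the spherical Pythagorean theorem to the right triangle with vertices at the focus $q_s$, the center $q_c$, and an endpoint $b$ of the minor axis (using that $|bq_s|=\alpha$ by the focal-sum property at $b$): this gives the key elimination identity
\[
\cos(\alpha/\rho)=\cos(\alpha\epsilon/\rho)\cos(\beta/\rho).
\]
Product-to-sum identities then convert $\sin\fe_{pc}\sin\fe_{ac}$ and $\cos\fe_{pc}\cos\fe_{ac}$ into expressions in $2\alpha/\rho$ and $2\alpha\epsilon/\rho$; the elimination identity, applied in the form $\cos(2\alpha\epsilon/\rho)-\cos(2\alpha/\rho)=2\cos^2(\alpha/\rho)\tan^2(\beta/\rho)$, lets me eliminate $\epsilon$ entirely and find
\[
\cot\fe_{pc}+\cot\fe_{ac}=\frac{2\tan(\alpha/\rho)}{\tan^2(\beta/\rho)}.
\]
Equating with the Vieta formula for the sum immediately yields the claimed expression for $G^2$; substituting this back into the Vieta formula for the product and simplifying using the double-angle identity $\sin(2\alpha/\rho)=2\sin(\alpha/\rho)\cos(\alpha/\rho)$ produces the claimed expression $-\tfrac{mM}{\rho}\cot(2\alpha/\rho)$ for $E$.

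The routine part is the trigonometric algebra once the two geometric identities are in hand. The main obstacle is the geometric step itself: verifying carefully that the focal-sum definition of the spherical ellipse gives $\rho\fe_{pc}=\alpha(1-\epsilon)$, $\rho\fe_{ac}=\alpha(1+\epsilon)$ and that the minor-axis endpoint satisfies $|bq_s|=\alpha$, so that one obtains a genuinely right spherical triangle to which the spherical Pythagorean theorem applies. For the negative-curvature case one replaces circular trig functions by hyperbolic ones throughout, with the quadratic in $u=\coth\fe$ arising from $\coth^2\fe-1=1/\sinh^2\fe$ and the corresponding hyperbolic Pythagorean relation $\cosh(\alpha/\rho)=\cosh(\alpha\epsilon/\rho)\cosh(\beta/\rho)$.
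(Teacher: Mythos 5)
Your proof is correct, and for the energy it coincides with the paper's: both identify pericenter and apocenter as the radial turning points where $p_\fe=0$ and extract $Kep_\kappa=-\frac{mM}{\rho}\cot(\fe_{pc}+\fe_{ac})$ from the resulting pair of equations (the paper by subtracting the two turning-point relations, you via the product of roots — the paper's subtraction is marginally cleaner since it never needs the $G^2$ formula). For the angular momentum, however, you take a genuinely different route. The paper applies the spherical law of cosines to the triangle $\Delta(f,q_s,q)$ to derive the focal polar equation $r=\frac{p^2}{1+e\cos\nu}$ of the spherical conic and then reads off $G^2=m^2Mp^2$ by matching against the dynamically derived orbit equation of remark \ref{rem:coords}; this has the side benefit of actually proving that the trajectories \emph{are} spherical conics. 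You instead get $G^2$ from Vieta's sum for the quadratic in $u=\cot\fe$ satisfied at the turning points, using only the focal-sum positions $\rho\fe_{pc,ac}=\alpha(1\mp\epsilon)$ and the right-triangle relation $\cos(\alpha/\rho)=\cos(\alpha\epsilon/\rho)\cos(\beta/\rho)$ — the same two geometric inputs the paper uses, but no law of cosines and no polar equation. Your version is more self-contained and treats $E$ and $G^2$ by one unified mechanism (the two symmetric functions of the roots of a single quadratic), at the price of taking the conic character of the orbit as a hypothesis rather than re-deriving it; since the proposition's statement already posits a spherical conic with focus at $q_s$, this is legitimate. I verified the trigonometric steps: $\cos(2\alpha\epsilon/\rho)-\cos(2\alpha/\rho)=2\cos^2(\alpha/\rho)\tan^2(\beta/\rho)$ and $\cot\fe_{pc}+\cot\fe_{ac}=2\tan(\alpha/\rho)/\tan^2(\beta/\rho)$ both check out, and substitution into the Vieta relations reproduces the stated formulas.
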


\begin{proof}
Consider the spherical triangle $\Delta(f,q_s,q)$ with sidelengths $2\alpha\epsilon, \rho\fe, 2\alpha - \rho\fe$ and interior angle $\pi - \nu$ opposite to side $\stackrel{\frown}{fq}$ (see fig.~\ref{fig:curvedConic}). The cosine rule of spherical trigonometry yields: $r = \rho\tan\fe = \frac{p^2}{1 + e\cos\nu},$ where $p^2 = \rho\frac{\cos\frac{2\alpha\epsilon}{\rho} - \cos\frac{2\alpha}{\rho}}{\sin\frac{2\alpha}{\rho}}$ and $e = \frac{\sin\frac{2\alpha\epsilon}{\rho}}{\sin\frac{2\alpha}{\rho}}$, so that the orbits are indeed curved conic sections. Comparing with the expression in remark \ref{rem:coords} yields $G^2 = m^2Mp^2$. The expression for $G$ in the proposition follows by using the relation: $\cos\frac{\beta}{\rho} = \frac{\cos\frac{\alpha}{\rho}}{\cos\frac{\alpha\epsilon}{\rho}}$ (consider the right spherical triangle $\Delta(q_s,q_c,b)$), to simplify.

To obtain the expression for $Kep_\kappa$, observe that $\fe_p := \frac{\alpha(1-\epsilon)}{\rho}$ and $\fe_a := \frac{\alpha(1+\epsilon)}{\rho}$ are maximal and minimal values of $\fe$ over the trajectory having energy $Kep_\kappa =: h$. Consequently $p_\fe = 0$ at $\fe_{a,p}$ so we have two solutions of the equation: $h\cos 2\fe = \frac{mM}{\rho}\sin 2\fe + h - \frac{G^2}{\rho^2}.$ Adding and subtracting the above equation evaluated at $\fe_{a,p}$ yields: $Kep_\kappa = h = \frac{mM}{\rho}\frac{\sin 2\fe_a - \sin 2\fe_p}{\cos 2\fe_a - \cos 2\fe_p} = - \frac{mM}{\rho}\cot(\fe_a + \fe_p).$ 
\end{proof}

\begin{rmrk}
The sign of $G$ represents the orbits orientation, with positive $G$ for counterclockwise motion around $q_s$. The same arguments apply to bounded motions when the curvature is negative, replacing trigonometric functions with their hyperbolic counterparts. The energy for bounded motions in a space of curvature $\kappa = - 1/\rho^2$ is always less than $-mM/\rho$.
\end{rmrk}

\begin{rmrk}
The orbits of the curved Kepler problem colliding with $q_s$ may be 'regularized' similarly to the usual Kepler regularization, via an elastic bounce  \cite{ChavReg}.
\end{rmrk}

\subsection{Delaunay and Poincar\'e coordinates}

We have the following analogues of the Delaunay and Poincar\'e symplectic coordinates for the curved Kepler problem:

\begin{prop}\label{prop:DP} Let $G$ be the angular momentum, $g$ the argument of pericenter,
\[L^2 := m^2M\rho\tan\frac{\alpha}{\rho},~~L>0,\]  and $\ell$ be proportional to the area swept out by the \textit{orthogonal} projection along the conic from pericenter, scaled so that $\ell\in \R/2\pi\Z$. Then $(L, \ell, G, g)$ are symplectic (Delaunay) coordinates for bounded non-circular motions.  The variables: \[\Lambda = L, ~~\lambda = \ell + g, ~~\xi = \sqrt{2(L-|G|)}\cos g, ~~\eta = \sqrt{2(L-|G|)}\sin g,\] are symplectic (Poincar\'e) coordinates in a neighborhood of the circular motions.

The energy is given by:
\begin{equation}\label{eq:KepL} Kep_\kappa = -\frac{m^3M^2}{2L^2} + \kappa\frac{L^2}{2m}.\end{equation}
\end{prop}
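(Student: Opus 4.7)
The plan has three main steps. First, I derive the closed-form energy expression (\ref{eq:KepL}) directly from the geometric formula in Proposition \ref{prop:enmom} by a double-angle manipulation: applying $\cot(2x) = (1-\tan^2 x)/(2\tan x)$ to $Kep_\kappa = -(mM/\rho)\cot(2\alpha/\rho)$ and substituting $\tan(\alpha/\rho) = L^2/(m^2 M \rho)$ splits $Kep_\kappa$ cleanly into the Keplerian term $-m^3 M^2/(2L^2)$ and the curvature correction $\kappa L^2/(2m)$. This is essentially a one-line check.

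Second, I construct the Delaunay variables via the Liouville-Arnold theorem. Since $Kep_\kappa$ and $G$ Poisson-commute and bounded non-circular orbits foliate invariant $2$-tori, action-angle coordinates exist. Because $\theta$ is cyclic in the spherical coordinates of Remark \ref{rem:coords}, $G = p_\theta$ is already one action. For the other I take $I_\fe := \frac{1}{2\pi}\oint p_\fe\, d\fe$, with $p_\fe$ solved from the energy equation
\[
p_\fe^2 = 2m\rho^2 E + 2m^2 M\rho \cot\fe - G^2/\sin^2\fe.
\]
The substitution $u = \cot\fe$ reduces the action integral to $\frac{1}{\pi}\int_{u_a}^{u_p} \sqrt{-G^2 u^2 + 2m^2 M\rho\, u + (2m\rho^2 E - G^2)}\,du/(1+u^2)$, a rational-square-root integral evaluable by residues: complexify $u$, place a branch cut between the two roots $u_p > u_a$, and collect contributions from the simple poles at $u = \pm i$ together with the residue at infinity. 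Substituting the expressions for $E$ and $G$ in terms of $\alpha, \epsilon$ from Proposition \ref{prop:enmom}, the residue sum collapses to $I_\fe = \sqrt{m^2 M\rho \tan(\alpha/\rho)} - |G| = L - |G|$, confirming that $(L, G)$ form a pair of actions with $L$ as defined. The conjugate angles $(\ell, g)$ then complete the chart: $g$ is, up to an additive constant, the argument of pericenter, and $\ell$ is the mean anomaly, whose geometric interpretation as a scaled swept area follows from the curved Kepler equation (\ref{eq:KepEq}) alluded to in the introduction.

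For the Poincar\'e coordinates I note that the map $(L, \ell, G, g) \mapsto (\Lambda, \lambda, \xi, \eta)$ is purely algebraic and independent of $\kappa$, so its symplecticity reduces to the same verification as in the flat Kepler case. Writing $\tilde G := L - |G| \geq 0$, the standard polar-coordinate identity $d\xi \wedge d\eta = \pm d\tilde G \wedge dg$ combines with $d\Lambda \wedge d\lambda = dL \wedge d\ell + dL \wedge dg$ to reproduce the original form $dL\wedge d\ell + dG\wedge dg$ on a punctured neighborhood of the circular motions, while the Poincar\'e variables now extend smoothly across the locus $\{L = |G|\}$ on which the Delaunay chart degenerates.

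I expect the main technical obstacle to be the residue calculation in the second step: tracking the branch of the square root consistently as one loops around each pole so that the contributions from $u = \pm i$ and from infinity combine with the correct signs to collapse into the compact expression $L - |G|$. Once the Delaunay chart is established, the energy formula and the Poincar\'e transformation are essentially bookkeeping, and the negative curvature case follows by replacing trigonometric with hyperbolic identities throughout.
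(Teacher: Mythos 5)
Your proposal is correct, but it reaches the Delaunay chart by a genuinely different route than the paper. The paper exploits the isochronicity of the curved Kepler problem: since all orbits of energy $H$ share a common period $T(H)$, it starts from the symplectic chart $(H,t,G,g)$, sets $\ell=\tfrac{2\pi}{T(H)}t$, and obtains $L$ by integrating $dL=\tfrac{T(H)}{2\pi}\,dH$ along the one-parameter family of \emph{circular} orbits, where both $T$ and $H$ have closed forms in terms of $\fe_c$; the energy formula \eqref{eq:KepL} then drops out of the circular-orbit relations rather than being checked separately. You instead follow the classical Liouville--Arnold route: take $G=p_\theta$ as one action and evaluate the radial action $I_\fe=\frac{1}{2\pi}\oint p_\fe\,d\fe$ by the substitution $u=\cot\fe$ and a residue computation. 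I checked your claimed value: with the branch of $\sqrt{-G^2u^2+2m^2M\rho\,u+(2m\rho^2E-G^2)}$ fixed to be positive on the top of the cut $[u_a,u_p]$, the contribution at infinity is $|G|$, the poles at $u=\pm i$ contribute $-\mathrm{Re}\sqrt{g(i)}=-L$ via the identity $g(i)=(L^2+im^2M\rho)^2/L^2$ (which uses exactly your step-one energy formula), and indeed $I_\fe=L-|G|$. Your approach buys self-containedness --- it does not presuppose that isoenergetic orbits have equal periods, a nontrivial fact the paper simply asserts --- at the cost of the branch bookkeeping you anticipate; the paper's approach is computation-free but leans on superintegrability. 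Two small caveats: your appeal to the curved Kepler equation \eqref{eq:KepEq} to justify the swept-area interpretation of $\ell$ is logically backwards --- the relevant input is that the \emph{orthogonal} projection of the motion sweeps area at a constant rate (Appendix A.3), from which \eqref{eq:KepEq} is itself derived --- and, as in the paper, the analytic extension of the Poincar\'e variables across the circular locus $\{L=|G|\}$ is asserted rather than proved (both of you defer to the flat-case argument of \cite{FejAA}). Neither affects correctness.
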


\begin{proof}
The construction of these coordinates is almost identical as for the planar Kepler problem (see e.g. \cite{FejAA}). We only found some difference in the computation determining $L$, owing to the fact that in the curved case, the period as a function of energy (Kepler's third law) does not have such a simple expression.

For non-circular motions, we have symplectic coordinates $(H = Kep_\kappa, t, G, g)$, where $t$ is the time. Since orbits of fixed energy, $Kep_\kappa = H$, all have a common period, $T(H)$, we set $\ell = \frac{2\pi}{T(H)}t$ and seek a conjugate coordinate $L(H)$ to $\ell$, i.e. we want to integrate: 
\[dL = \frac{T(H)}{2\pi}dH.\]
To integrate this expression, we make some changes of variable. For $H$ an energy value admitting bounded motions, let $\fe_c(H)$ be the angular distance of the circular orbit having energy $H$. Then: \[ H = - \frac{mM}{\rho}\cot 2\fe_c,~~G_cT(H) = 2\pi m\rho^2\sin^2\fe_c\] where $G_c^2 := \rho m^2M\tan\fe_c$ is the angular momentum of the circular solution. Note that  \[(*)~~~~H = -\frac{m^3M^2}{2G_c^2} + \kappa \frac{G_c^2}{2m}.\] We compute that $\frac{T}{2\pi}dH = dG_c$. So we take $L = G_c$, and have $L^2 = m^2M\rho\tan\frac{\alpha}{\rho}$ from $(*)$ and prop. \ref{prop:enmom}.
\end{proof}

\begin{rmrk}
The mean anomaly, $\ell$, is related to time by $d\ell = n~ dt$ where $n(L) := \frac{m^3M^2}{L^3} + \kappa\frac{L}{m}$. When the curvature is negative, the same arguments lead to $L^2 = m^2M\rho\tanh\frac{\alpha}{\rho}$, and the same expression, eq. (\ref{eq:KepL}), for the energy.
\end{rmrk}

\subsection{Other anomalies}\label{sec:anoms}

In averaging functions over curved Keplerian orbits, e.g.: determining $\frac{1}{2\pi}\int_0^{2\pi} f(q)~d\ell$, it is often useful to perform a change of variables, as the position on the orbit, $q$, does not have closed form expressions in terms of $\ell$. We collect here some parametrizations of Keplerian orbits.  Although not all are necessary for our main results, they may serve useful in other perturbative studies of the curved Kepler problem. 

The position on the conic is given explicitely in terms of the true anomaly (remark \ref{rem:coords}). By conservation of angular momentum (recall $d\ell = n~dt$ with $ n = \frac{m^3M^2}{L^3} + \kappa \frac{L}{m}$): 
\[ G~d\ell =n~m\rho^2\sin^2\fe ~d\nu. \]

One may centrally project a curved Kepler conic to the tangent plane at $q_s$ and then parametrize this planar conic by its eccentric anomaly, which we denote here by $u_o$. Letting $a, e, b$ be the semi-major axis, eccentricity and minor axis of this planar conic, the position is given by:
\begin{align}\label{eq:flatEApos}
r &= \rho\tan\fe = a(1 - e\cos u_o) & x &= r\cos\nu = a(\cos u_o - e) & y&=r\sin\nu = b\sin u_o 
\end{align}
 and one has: \begin{equation}\label{eq:flatEA}
    \sqrt{\frac{M}{a}}~d\ell =n~\rho\sin\fe\cos\fe~du_o.\end{equation}

Some more parametrizations arise naturally when one \textit{orthogonally} projects the curved Kepler ellipse onto the tangent plane at $q_s$. The time-parametrized motion along this  plane curve sweeps out area at a constant rate, however it is now a quartic curve: the locus of a 4th order polynomial in the plane. This quartic may be seen naturally as an elliptic curve and then parametrized by Jacobi elliptic functions. Taking $R = \rho\sin\fe, X = R\cos\nu, Y = R\sin\nu$, the quartic is the projection to the $XY$-plane of the intersection of the quadratic surfaces:
\[  R^2 = X^2 + Y^2, ~~~~ (R  + eX)^2 = p^2(1 - \kappa R^2), \]
where $p, e$ are as in the proof of prop. \ref{prop:geomcoords}.  Consequently we find the parametrization:
\begin{align*}
R &= \rho\sin\frac{\alpha}{\rho}k' \text{nd}_k w - \rho\cos\frac{\alpha}{\rho} k \text{cd}_k w & X &= \rho\sin\frac{\alpha}{\rho}k' \text{cd}_k w - \rho\cos\frac{\alpha}{\rho} k \text{nd}_k w & Y&= \rho\tan\frac{\beta}{\rho}\cos\frac{\alpha}{\rho}\text{sd}_k w. 
\end{align*}
where $k = \sin\frac{\alpha\epsilon}{\rho}, k' = \cos\frac{\alpha\epsilon}{\rho}$. Since the area is swept at a constant rate, one computes:
\[ \rho\sin\frac{\alpha}{\rho}~d\ell = \rho\sin\fe~dw.\]
 which integrates to give a 'curved Kepler equation', i.e. the relation between position and time through: \begin{equation}\label{eq:KepEq}
    \ell = \arccos \text{cd}_k w - \frac{\cot\frac{\alpha}{\rho}}{2}\log \frac{1 + k\text{sn}_k w}{1 - k\text{sn}_k w}
\end{equation}
It turns out that a geometric definition of eccentric anomaly, $u$ (see fig.~\ref{fig:eccanom}), is the Jacobi amplitude of $w$: \[ du = \text{dn}_k w~dw,\] which can be established by using spherical trigonometry to give the position in terms of $u$, and some straightforward, although tedious, simplification.

\begin{figure}[!htb]
\centering
\includegraphics[scale = .4]{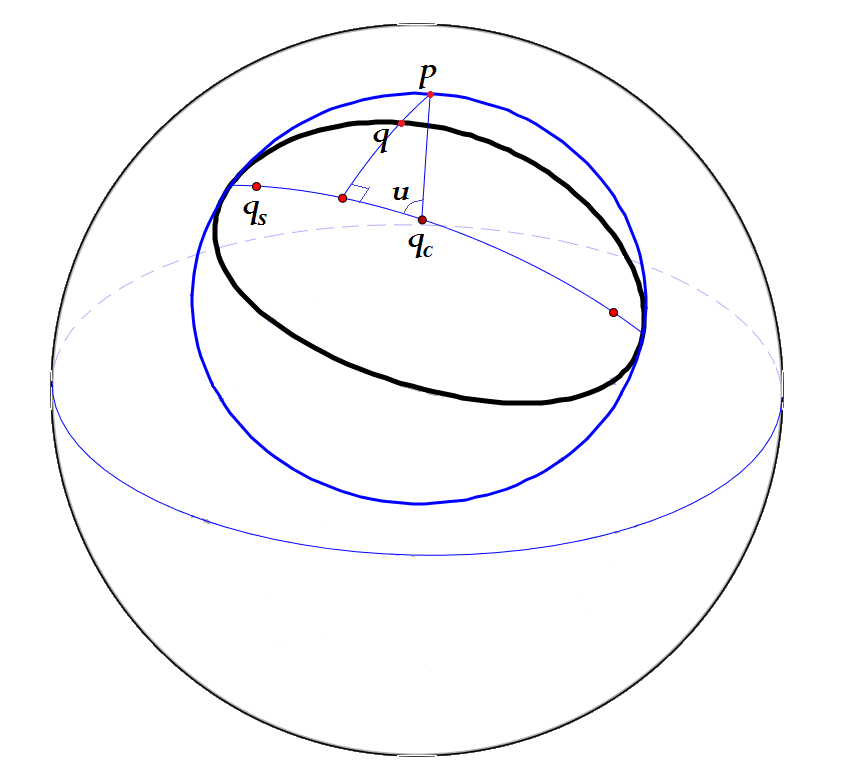}
\caption{\footnotesize{A geometric eccentric anomaly, $u$, for a (non-circular) Keplerian conic on the sphere. One inscribes a circle around the conic and to a point $q$ on the conic assigns the angle $u :=\angle(pc, q_c, p)$, where $p$ is the intersection of the circle with the perpendicular dropped from $q$ to the major axis.}}\label{fig:eccanom}
\end{figure}

\begin{rmrk}
Projecting the spherical orbits from the south pole leads as well to quartic curves in the tangent plane at $q_s$, however along these quartics the time-parametrization is no longer by sweeping area at a constant rate --as it is for orthogonal projection-- which we found only led to complicated expressions. Another parametrization of the orbits is presented in \cite{Koz} eq. (28), and used to derive a different 'curved Kepler equation' from our eq. (\ref{eq:KepEq}).
\end{rmrk}

\section{Expansion of $\langle Per\rangle$}
\label{app:Per}

We will compute an expansion in powers of our small parameter $\e$ for $\langle Per\rangle$ (prop. \ref{prop:scal}) upto $O(\e^5)$, eq. (\ref{eq:PerExp}). This expansion may be found by using the 'flat eccentric anomaly', $u_o$, of eq. (\ref{eq:flatEA}) to average the terms.  With this approach, it is necessary to make use of the following formulas allowing one to translate between the major axis and eccentricity $(a,e)$ of the centrally projected planar conic and our Delaunay coordinates:
\[ a = \frac{L^2}{m^2}\left(\frac{1}{1 - \frac{\kappa}{m^4}L^2(L^2-G^2)}\right) = \rho\e \frac{\hat L^2}{m^2}(1 + O(\e^2)),~~~~~~e^2 = (1 - \frac{\hat G^2}{\hat L^2})(1 + \frac{\e^2}{m^4}\hat L^2\hat G^2)\]
(recall we set $m = m_1m_2$). Note that by eq. (\ref{eq:flatEApos}), $r = \rho\tan\fe = \rho O(\e)$, so indeed $\fe = \frac{r}{\rho} + O(\e^3)$ is $O(\e)$.

By Taylor expansion of eq. (\ref{eq:Fred}) in $\fe$, and then exchanging $\fe$ to $\frac{r}{\rho}$, we have:
\begin{equation}\label{eq:Tayexp}
Per = \e^2\left( \frac{\hat C^2}{2} - \hat G^2 + \frac23(m_2-m_1)\hat G\sqrt{\hat C^2 - \hat G^2}  \frac{r\cos\theta}{\rho}  + ((\hat C^2 - \hat G^2)\cos^2\theta - \hat G^2)\sigma\frac{r^2}{\rho^2}\right) + O(\e^5),\end{equation}
where we set $\sigma = \frac{1 -(m_1^3 + m_2^3)}{6}$. So, to determine $\langle Per\rangle = \frac{1}{2\pi}\int_0^{2\pi} Per~d\ell$ upto $O(\e^5)$, it remains to find:
\[
\langle r\cos\theta \rangle,  ~~~~~~~ \langle r^2\cos^2\theta \rangle,  ~~~~~~~ \langle r^2 \rangle.
\]
By eq. (\ref{eq:flatEA}), $d\ell = n\sqrt{a}\frac{r}{1 + \kappa r^2} du_o = n\sqrt{a} r (1 - \kappa r^2 + O(\e^4))~du_o$, where 
\[ n\sqrt{a} = \frac{m^2}{\rho\e\hat L^2} + O(\e).\] 

Hence:
\[ \langle r\cos\theta \rangle = \frac{n\sqrt{a}}{2\pi} \int_0^{2\pi}r^2\cos\theta -  \kappa r^3\cos\theta ~du_o + O(\e^3).\]
Using $\theta = \nu + g$, and the expressions for $r, r\cos\nu, r\sin \nu$ from eq. (\ref{eq:flatEApos}), we obtain:
\[ \langle r\cos\theta \rangle = a^2 e n \sqrt{a}\cos g \left( -\frac32 + \kappa a ( 2 + \frac{e^2}{2}) \right). \]
Or, in terms of the (scaled) Delaunay coordinates:
\begin{equation}\label{eq:avg1}
\langle r\cos\theta \rangle = \rho\e \cos g~ \hat L\sqrt{\hat L^2 - \hat G^2}\left( - \frac32 + \frac{\e}{2\rho m^2}(5\hat L^2 - \hat G^2) \right) + O(\e^3).
\end{equation}
Likewise, one computes:
\begin{align}\label{eq:avg2}
\langle r^2\cos^2\theta \rangle &= \frac{\rho^2\e^2}{2m^4}\hat L^2\left( 2\hat L^2 + (3 + 5\cos 2g)(\hat L^2 - \hat G^2)\right) + O(\e^4),  & \langle r^2 \rangle&= \frac{\rho^2\e^2}{2m^4}\hat L^2(5\hat L^2 - 3\hat G^2) + O(\e^4).
\end{align}

Combining eqs. (\ref{eq:avg1}), (\ref{eq:avg2}) with the averaged eq. (\ref{eq:Tayexp}), yields:
\begin{equation}\label{eq:PerExp}
    \begin{split}
\langle Per\rangle = \e^2\left(\frac{\hat C^2}{2} - \hat G^2\right) + \e^3\left(m_\Delta\hat L\hat G\sqrt{(\hat C^2 - \hat G^2)(\hat L^2 - \hat G^2)}\cos g\right)  \\
+ \e^4\left[ \tilde m \hat L^2 \left( (\hat C^2 - \hat G^2)(2\hat L^2 + (\hat L^2 - \hat G^2)(3 + 5\cos 2g)) - \hat G^2(5\hat L^2 - 3\hat G^2)\right) \right. \\
\left.  - \frac{m_\Delta}{3\rho m^2}\hat L\hat G\sqrt{(\hat L^2 - \hat G^2)(\hat C^2 - \hat G^2)}(5\hat L^2 - \hat G^2)\cos g  \right] + O(\e^5),\\
    \end{split}
\end{equation}
where we set $m_\Delta = m_1-m_2, \tilde m = \frac{1 - m_1^3 - m_2^3}{12m^4}$, and $m = m_1m_2$.

\begin{rmrk}
When the masses are equal, one may avoid the need to take expansions as we have done here by using the simplified expression in remark \ref{rmrk:eqMass}, and making use of the different anomalies presented in section \ref{sec:anoms} to obtain explicit expressions. 
\end{rmrk}

Taking into account higher order terms of $\langle Per\rangle$ leads to finer descriptions of the orbits. For example above we have for the most part worked at order $\e^2$, at which we see the precession properties described for our continued orbits. Considering the order 3-terms (or order 4-terms with equal masses), one arrives at a more precise description of the near collision orbits. Namely, upon fixing a value of $L$, the phase portrait of the secular dynamics on the $(G,g)$ cylinder near $G= 0$ is qualitatively as that of the pendulum: there are two unstable (saddle) fixed points with $G = 0, g = 0, \pi$, having a heteroclinic connection. These fixed points lift to the red collision orbits of fig.~\ref{fig:orbs}. One corresponds to a collision orbit with body 1 in the northern hemisphere and body 2 in the southern hemisphere while the other collision orbit has body 2 in the northern hemisphere.

\begin{rmrk}\label{rmrk:split}
For the true dynamics, one expects a splitting of these stable and unstable manifolds. If one could establish a transversal intersection between these manifolds, one would obtain random motions near these collision orbits along the equator of the following form.  For any sequence $s_1,s_2,...$ with $s_k\in \{1,2\}$, there would exist an orbit for which, during the time interval $[nT, (n+1)T]$  the two bodies are closely following the periodic collision orbit with body $s_n$ in the northern hemisphere, where $T>0$ is some sufficiently long time period.
\end{rmrk}

\end{document}